\newtheorem{proposition}{Proposition}
\newtheorem{lemma}{Lemma}
\newtheorem{theorem}{Theorem}
\newtheorem{remark}{Remark}
\newtheorem{definition}{Definition}
\DeclareMathOperator{\E}{\mathsf{E}}
\DeclareMathOperator{\T}{\mathsf{T}}
\DeclareMathOperator{\Tr}{tr}
\title{\LARGE \bf
 Optimal LQG Control under Delay-dependent Costly Information}
\author{Dipankar Maity$^{1}$, Mohammad H. Mamduhi$^{2,3}$, Sandra Hirche$^{3}$, Karl Henrik Johansson$^{2}$, and John S. Baras$^{1}$
\thanks{}
\thanks{$^{1}$D. Maity, and J. S. Baras are with the Department of Electrical \& Computer Engineering, Institute for Systems Research, University of Maryland, USA,
       {\tt\small} {\tt\small $\{$dmaity,baras$\}$@umd.edu}}
\thanks{$^{2}$M. H. Mamduhi, and K. H. Johansson are with the  Department of Automatic Control, The Royal Institute of Technology,
       SE-100 44 Stockholm, Sweden,
       {\tt\small} {\tt\small $\{$mamduhi,kallej$\}$@kth.se}}
\thanks{$^{3}$M. H. Mamduhi, and S. Hirche are with the Chair of Information-Oriented Control,
       Technical University of Munich, Arcisstra\ss{}e 21, D-80290 M\"unchen, Germany,
       {\tt\small} {\tt\small $\{$mh.mamduhi,hirche$\}$@tum.de}}
\thanks{This work is jointly supported by DARPA through ARO grant W911NF1410384, and by ONR grant N00014-17-1-2622, the German Research Foundation (DFG) within the Priority Program SPP 1914 ``Cyber-Physical Networking'', and the Knut and Alice Wallenberg Foundation, the Swedish Strategic Research Foundation, and the Swedish Research Council.}
}
\begin{document}

\maketitle
\thispagestyle{empty}
\pagestyle{empty}

\begin{abstract}
In the design of closed-loop networked control systems (NCSs), induced transmission delay between sensors and the control station is an often-present issue which compromises control performance and may even cause instability. A very relevant scenario in which network-induced delay needs to be investigated is costly usage of communication resources. More precisely, advanced communication technologies, e.g. 5G,  are capable of offering latency-varying information exchange for different prices. Therefore, induced delay becomes a decision variable. It is then the matter of decision maker's willingness to either pay the required cost to have low-latency access to the communication resource, or delay the access at a reduced price. In this article, we consider optimal price-based bi-variable decision making problem for single--loop NCS with a stochastic linear time-invariant system. Assuming that communication incurs cost such that transmission with shorter delay is more costly, a decision maker determines the switching strategy between communication links of different delays such that an optimal balance between the control performance and the communication cost is maintained. In this article, we show that, under mild assumptions on the available information for decision makers, the separation property holds between the optimal link selecting and control policies. As the cost function is decomposable, the optimal policies are efficiently computed. 
\end{abstract}

\section{INTRODUCTION}

In the design of closed-loop NCSs where information is exchanged between sensors, controller and actuator over a limited-resource communication network, induced transmission delay plays a key role in characterizing control performance and stability properties \cite{5400548,878601}. Day-by-day increase of data volume that needs to be exchanged 
urges access to fast and low-error communication infrastructure to support the stringent real-time requirements of such systems. This, however, imposes higher communication and computation costs, resulting in reconsideration of employing time-based sampling techniques with equidistant fixed temporal durations. Various approaches are developed to coordinate data exchange in NCSs with the aim of reducing the total sampling and communication rate. Effective techniques such as event-based sampling, scheduling, and network pricing are introduced leading to the reduction of communication and computational costs by restricting unnecessary data sampling. Having intermittent sampling, delay is induced in various parts of the networked system which may degrade control performance. Hence, such decision makers need to be carefully designed in order to preserve stability as well as providing required quality-of-control (QoC) guarantees.

Event-based control introduced as a beneficial design framework to coordinate sampling of signals based on some urgency metrics, e.g. an action is executed only when some pre-defined events are triggered~\cite{bernhardsson1999comparison}. This idea received substantial attention and is further developed as a technique capable of significantly reducing sampling rate while preserving the required QoC~\cite{MAMDUHI2013356,5510124,6068223,7300689,MAMDUHI2017209}. The mentioned works, among many more, consider sporadic data sampling governed by real-time conditions of the control systems or the communication medium. 
Synthesis of optimal event-based strategies in NCSs is also addressed \cite{RabiBaras2006,CervinCamacho2011,MolinHirche2013}

Data scheduling is employed by communication theorists for decades as an effective resource management technique \cite{Bao:2001:NAC:381677.381698,LiYaxin2001}. By emerging NCSs as integration of multiple control systems supported by communication networks, cross-layer scheduling attracted more attentions. The reason is scheduling induces delay and affects NCS stability and QoC, hence, scheduling approaches that take into account real-time conditions of control systems become popular \cite{WalshYe2001,MAMDUHI2017209,6286997}. 

Designing price mechanisms for multi-user networks, to guarantee quality-of-service (QoS) is popular  in communication \cite{BasarSrikant2002,1208958}. 
In these works the goal is often set to maximize the QoS, which is a network-dependent utility expressed often in form of effective bandwidth requirements. In NCSs, however, QoC is of interest which additionally takes into account users' dynamics. Optimal communication pricing aiming at maximizing the QoC in NCSs has received less attention with a few exceptions, e.g. \cite{MolinHirche2014TAC,1664999}.

In those mentioned works, delay is considered an inevitable network-induced phenomena resulting from the employed sporadic sampling mechanisms. 
Novel communication technologies, e.g. 5G, offer not only ``bandwidth'' as the resource to pay for, but also real-time ``latency''. Users can decide to pay a higher price for lower latency or to delay data exchange at a reduced price. 
In such scenarios, the resulting induced delay plays as an explicit decision variable, i.e., users can optimize their utilities versus the communication price. In this article, we take the first steps in this direction by addressing the problem of joint optimal control and delay-dependent switching policies for a single-loop NCS with costly communication. The switching law determines the length of delay associated with the data sent over the network. We assume that every transmission incurs a cost determined by the associated delay, such that shorter delay 
incurs higher cost. Aggregating the LQG cost and delay-dependent communication cost over a finite horizon, we derive the optimal control and switching laws assuming that communication prices are known \textit{apriori}. It is then shown that the optimal control and switching laws are separable in expectation, and thus can be computed offline.~It guarantees the computational feasibility of our proposed approach.

                      
\section{Problem Formulation}\label{prob_state}
Consider an LTI controlled system, consisting of a physical plant $\mathcal{P}$ and a controller $\mathcal{C}$. The plant $\mathcal{P}$ is descried by 
\begin{equation}
\begin{aligned} 
x_{k+1}=Ax_k+Bu_k+w_k
\end{aligned}
\label{eq:sys_model}
\end{equation}
where $x_k\!\in \!\mathbb{R}^{n}$ is the system state, $u_k \!\in \!\mathbb{R}^{m}$ is the control signal executed at time $k$, and $w_k\in \mathbb{R}^{n}$ is the exogenous disturbance. The constant matrices $A\!\in \!\mathbb{R}^{n\times n}$, and $B\!\in \!\mathbb{R}^{n\times m}$ describe drift matrix, and input matrix, with the pair $(A,B)$ assumed to be controllable. The disturbance $w_k$ and the initial state~$x_0$ are i.i.d. random variables with realizations $w_k\!\sim \!\mathcal{N}(0,W)$, and $x_0\!\sim\!\mathcal{N}(0,\Sigma_0)$, where $W\!\succeq \!0$  and $\Sigma_0\!\succeq \!0 $ denoting the variances of the respective Gaussian distributions. 
For the purpose of simplicity, we assume that the sensor measurements are perfect copies of state values. 

In this article we address a delay-dependent LQG problem. As shown in Fig. \ref{fig:sysmodel}, there are $D$ number of links, each associated with a delay time $\{1,\ldots,D\}$. Selection of transmission link decides the arrival time instance of data at the controller, i.e. controller update may be delayed. Each link has a known cost of operation that increases as delay decreases.  
Note that, in the NCS scenario illustrated in Fig. \ref{fig:sysmodel}, 
the control unit which determines the switching policy of transmission links is a separate decision making unit with specific information structure, and must be distinguished from the plant controller.

Recall that the optimal LQG control is a certainty equivalent control that uses state estimation based on regularly-sampled measurements. In this work, however, the arrival of measurements and consequently the estimation quality depends on the selected delay link. Thus, unlike standard optimal LQG  where there is one controller that generates the control signal, here another control unit with an appropriate information structure exists and determines the optimal strategy to select the delay links. To take this into account, we first define the binary decision variable $\theta^i_k$ as follows:
\begin{equation*}
\theta^i_k=\begin{cases} 1, &\!\!\!\!\!\!\!\!\!\!\!\text{at time $k$ link with $i$ step delay is selected} \\ 0, \qquad & \!\!\!\!\!\!\!\!\!\!\!\text{at time $k$ link with $i$ step delay is not selected} \end{cases}
\end{equation*}
Based on the above definition, if $\theta_k^i=1$, the controller has access to system state $x_k$ at time-step $k+i$. 
We assume the possibility of selecting more than one links at each time, i.e.
\begin{equation}\label{const1}
\sum\nolimits_{i=1}^D\theta^i_k\ge 1, \quad \forall \; k\in\{1,2,\ldots\}
\end{equation}
where, the finite variable $D\!\in \!\mathbb{N}$ denotes the maximum allowable delay. Each link with associated delay $i$ is assigned a price, denoted by $\lambda_i \!\in \!\mathbb{R}^+$, to be paid if it is selected for transmission. Hence, at each time $k$, the switching~decision $\theta_k$, can be represented by a binary-valued vector as follows
\begin{equation}\label{decision_variables}
\theta_k\triangleq[\theta^1_k,\ldots,\theta^D_k]^{\textsf{T}}.
\end{equation}
The prices for each communication link $i\in \{1,\ldots,D\}$ are denoted by $\lambda_i$, and are fixed \textit{apriori} with the following order:
\begin{equation*}
\lambda_1 >\lambda_2>\ldots>\lambda_D>0.
\end{equation*}
%

\begin{remark}
In this framework, a link with very large delay~$D_{ol}\!\gg~\!1$ and cost $\lambda_{ol}\!=\!0$ can be added such that a transmission becomes very unlikely. Theoretically, $D_{ol}\!\rightarrow \!\infty$, the system opts to be open-loop. In our scenario, however, system is forced to select at least one link, according to  (\ref{const1}).
\end{remark}
\begin{figure}[tb]
\centering
\psfrag{a}[c][c]{\scriptsize \textsf{Plant}}
\psfrag{b}[c][c]{\scriptsize \textsf{Controller}}
\psfrag{v}[c][c]{\scriptsize \textsf{link with $d$}}
\psfrag{w}[c][c]{\scriptsize \textsf{step delay}}
\psfrag{x}[c][c]{\scriptsize $Z^{\text{-}d}(x_k)$}
\psfrag{e}[c][c]{\scriptsize $\theta_k$}
\psfrag{d}[c][c]{\scriptsize $u_k$}
\psfrag{g}[c][c]{\scriptsize $x_k$}
\psfrag{h}[c][c]{\tiny $Z^{\text{-}1}(x_k), \!\lambda_1$}
\psfrag{k}[c][c]{\tiny $Z^{\text{-}2}(x_k), \!\lambda_2$}
\psfrag{f}[c][c]{\tiny $Z^{\text{-}D}(x_k), \!\lambda_D$}
\psfrag{i}[c][c]{\scriptsize $\cdot$}
\psfrag{j}[c][c]{\scriptsize $\cdot$}
\psfrag{l}[c][c]{\scriptsize $\cdot$}
\psfrag{c}[c][c]{\scriptsize \textsf{Link decision unit}}
\psfrag{m}[c][c]{\scriptsize $Z^{\text{-}d}(x_k)$}
\psfrag{y}[l][l]{\scriptsize \textsf{delay switch}}
 \vspace{-1mm}
\includegraphics[width=0.46\textwidth,height=4.9cm]{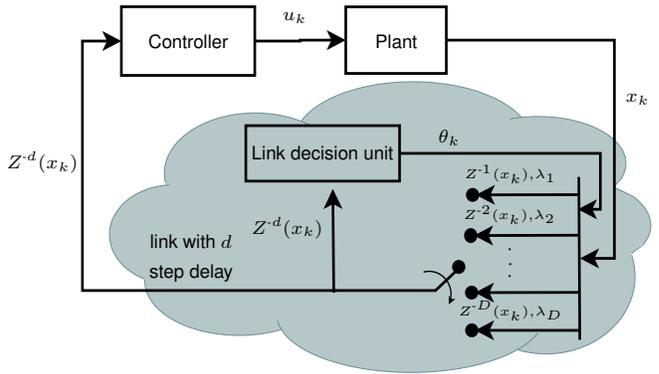}
 \caption{Schematic of a closed-loop system with communication delay, where $Z^{-d}(x_k)$ means $x_k$ will be received by control unit $d$ time-steps later, at the expense of $\lambda_d$.}
 \label{fig:sysmodel}
 \vspace{-5.mm}
\end{figure}


According to (\ref{decision_variables}), the received state information at the controller at time-step~$k$, denoted by $\mathcal{Y}_k$, is expressed as
\begin{equation}
\mathcal{Y}_k=\{\theta_{k-1}^1x_{k-1},\theta_{k-2}^2x_{k-2},\ldots, \theta_{k-D}^Dx_{k-D}\}
\end{equation}
where, $\theta_{-1}^i\!=\!\ldots\!=\!\theta_{-D}^i\!=\!1$, for all $i$ to represent equations compactly. 
The system possesses two decision makers; one decides the delay link via $\theta_k$, and one computes control signal $u_k$. To define the information set and the associated $\sigma$-algebra available to each decision maker, 
we first introduce two sets $Y_{k}\!\triangleq \!\{\mathcal{Y}_0,\ldots, \mathcal{Y}_k\}$, and $U_k\!\triangleq \!\{u_0,\ldots, u_k\}$, containing the received state information, and control signals, up to and including time $k$, respectively.
We now define the information sets $\mathcal{I}_k$ and $\mathcal{\bar{I}}_k$ at time $k$, respectively accessible for the switching and the plant controllers, as follows:
\begin{align*}
 \mathcal{I}_k &\triangleq \{Y_{k-1},U_{k-1}, \cup_{t=1}^{k-1}\{\theta_t\}\}, \quad
\mathcal{\bar{I}}_k \triangleq \{\mathcal{I}_k, \mathcal{Y}_k,\theta_k\}.
\end{align*}
At every time $k$, the control and delay switching strategies are measurable functions of the $\sigma$-algebras generated by $\bar{\mathcal I}_k$, and $\mathcal{I}_k$, respectively, i.e., $u_k\!=\!g_k(\bar{\mathcal{I}}_k)$, and $\theta_k\!=\!s_k(\mathcal{I}_k)$.
The order of decision making in one cycle of sampling is as follows: $\cdots \!\rightarrow\!\mathcal{I}_{k}\!\rightarrow\! \theta_k \!\rightarrow \!\bar{\mathcal{I}}_k \!\rightarrow \!u_k \!\rightarrow \!\mathcal{I}_{k+1}\!\rightarrow\!\cdots$. In general, the computation of the optimal control $u_k^*$ requires the knowledge of the optimal $\theta_k^*$. However, we show later that, under the introduced information structures, $\theta_k^*$ can be computed offline, and hence computation $u_k^\ast$ will not require on-line update about $\theta_k^*$. A possible implementation of this protocol is to send the preference of selecting the delay link to a network manager (it is the communication service provider that offers different QoS (delay)) that, upon receiving the sensor data $x_k$, selects the preferred transmission link. 


The cost function, that is jointly minimized by the two decision variables $g_k(\bar{\mathcal{I}}_k)$, and $s_k(\mathcal{I}_k)$, consists of an LQG part and communication cost. Within the finite horizon, the average cost function is stated by the following expectation
\begin{align*}
&J(u,\theta)\!=\!\E\!\left[\sum\nolimits_{t=0}^{T-1}\!\left[x_t^{\top}Q_1x_t\!+\!u_t^{\top}Ru_t\!+\!\theta_t^\top\Lambda\right]\!+x_T^{\top}Q_2x_T\right]\!,
\end{align*}
where, $\Lambda\triangleq[\lambda_1,\ldots,\lambda_D]^\top$, $Q_1\!\succeq \!0$, $Q_2\!\succeq \!0$, and $R\!\succ \!0$. 

\section{Optimal Strategies for Control \& Switching}\label{optimal_design}

The optimal control and switching strategies are the minimizing arguments of the latter average cost function,~i.e.
\vspace{-2.75mm}
\begin{equation}\label{optimizal_str}
(u^\ast, \theta^\ast)=\text{arg}\min_{u,\theta} J(u,\theta),
\end{equation}
where the average cost optimal value equals $J^\ast\!=\!J(u^\ast, \theta^\ast)$. In the sequel, we show that the problem (\ref{optimizal_str}) is separable in its arguments $u$, and $\theta$ and can be disjointly optimized offline. In fact, we show that the optimal control policy is linear, and independent from the sequence of link switching decisions $\theta$, while the state estimation is a nonlinear function of $\theta$, which can be found offline. 
\begin{proposition}
An optimal strategy $u^*,\theta^*$ always exists if the Riccati equation $P_k$ has a well defined solution for the whole horizon $[0,T]$.
\begin{align}\label{Riccati}
&P_k=\\\nonumber
&Q_1+A^\textsf{T}\!\left(\!P_{k+1}-P_{k+1}B\left(R+B^\textsf{T}P_{k+1}B\right)^{-1}\!B^\textsf{T}P_{k+1}\!\right)\!A,\\
&P_T=Q_2
\end{align}
\end{proposition}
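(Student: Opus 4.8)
\emph{Proof plan.} The strategy I would use is backward dynamic programming over the finite horizon, organised around the fixed order of play $\mathcal{I}_k \goto \theta_k \goto \bar{\mathcal I}_k \goto u_k$. Introduce the cost-to-go $V_k$ evaluated right after $u_k$ is applied and, nested inside it, the cost-to-go $W_k$ evaluated after $\theta_k$ is committed but before $u_k$ is chosen, with terminal value $V_T = \E[\,x_T^{\textsf T}Q_2 x_T \mid \bar{\mathcal I}_T\,]$. Then $V_k$ is produced from $W_{k+1}$ by minimising over $u_k = g_k(\bar{\mathcal I}_k)$, and $W_k$ from $V_k$ by minimising over $\theta_k = s_k(\mathcal I_k)$; showing that each of these two minimisations is attained gives the claim.

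For the control step I would carry an induction hypothesis that $V_{k+1} = \E[\,x_{k+1}^{\textsf T} P_{k+1} x_{k+1}\mid \bar{\mathcal I}_{k+1}\,]$ plus a term free of $u_{k+1},\dots,u_{T-1}$. Substituting \eqref{eq:sys_model} and using that $w_k$ is zero-mean and independent of $\bar{\mathcal I}_k$ and of $u_k$, the stage-$k$ problem becomes the minimisation of a quadratic in $u_k$ with Hessian $R + B^{\textsf T}P_{k+1}B$; the hypothesis that the Riccati recursion \eqref{Riccati} is well defined on all of $[0,T]$ is exactly what makes this Hessian positive definite, hence the sub-problem coercive and its value finite, at every stage. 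Completing the square gives the affine minimiser $u_k^\ast = -(R+B^{\textsf T}P_{k+1}B)^{-1}B^{\textsf T}P_{k+1}A\,\hat{x}_k$ with $\hat{x}_k = \E[\,x_k \mid \bar{\mathcal I}_k\,]$, re-establishes the postulated form of $V_k$ with $P_k$ as in \eqref{Riccati}, and leaves an additive remainder assembled from $\Tr(P_{k+1}W)$ and from the conditional covariance of the estimation error $x_k - \hat{x}_k$.

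For the switching step, I would observe that once the control is optimised the only residual dependence on the decision variables is through that sequence of error covariances --- which is a function solely of \emph{which} past samples have already reached the controller, i.e.\ of the realised binary vectors $\theta_0,\dots,\theta_{T-1}$ --- together with the linear price term $\sum_t \theta_t^{\textsf T}\Lambda$. Since each $\theta_k$ ranges over the finite set $\{0,1\}^D\cap\{\zeta:\sum_{i=1}^D\zeta^i\ge 1\}$, minimising $W_k$ over $s_k(\mathcal I_k)$ reduces, pointwise in $\mathcal I_k$, to minimising a real-valued function over a finite set, so an ($\mathcal I_k$-measurable) minimiser exists; backward induction then yields an optimal $\theta^\ast$, and pairing it with the $u^\ast$ above produces an admissible pair attaining $J^\ast$.

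The part I expect to be the real obstacle is not the existence count but verifying that the certainty-equivalence split genuinely survives the $\theta$-dependent, delayed measurement model: one must check that $\hat{x}_k$ is computable from $\bar{\mathcal I}_k$ (which relies on both decision makers retaining enough of the past $u$'s and $\theta$'s, as the definitions of $\mathcal I_k$ and $\bar{\mathcal I}_k$ ensure), that $x_k-\hat{x}_k$ stays conditionally zero-mean and statistically independent of the applied controls so that all cross terms vanish under expectation, and that committing $\theta_k$ strictly before $u_k$ does not entangle the two minimisations --- the latter holding because, once $\theta_k$ is fixed, the inner control value depends on $\theta_k$ only through the additively entering error covariance, leaving the outer minimisation a clean finite-set problem. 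Once these measurability-and-independence bookkeeping points are in place, the assumed solvability of $P_k$ over $[0,T]$ closes the argument by keeping every stage cost finite and every control sub-problem coercive.
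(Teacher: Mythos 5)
Your proposal is correct and follows essentially the same route the paper takes (implicitly, since the paper never writes out a standalone proof of this proposition): backward dynamic programming with the nested value functions, certainty-equivalence and completion of squares yielding the Riccati recursion \eqref{Riccati} exactly as in Appendix A, and existence of the optimal switching law because, after the control is optimised, the residual $\theta$-dependent cost reduces to a minimisation over the finite feasible set, which is the MINP \eqref{MINP}. The only cosmetic remark is that with $R\succ 0$ and $Q_1,Q_2\succeq 0$ the Hessian $R+B^{\textsf T}P_{k+1}B$ is positive definite automatically (by induction $P_{k+1}\succeq 0$), so the proposition's hypothesis is in fact always satisfied over a finite horizon rather than being the binding condition your argument suggests.
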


\begin{remark}
The existential condition of an optimal strategy is no different than the condition for a classical LQG optimal control problem.
\end{remark}

\subsection{Optimal control strategy}
Knowing that $\mathcal{I}_k\subseteq \bar{\mathcal{I}}_k$, we can re-write $J(u,\theta)$ as:
\vspace{-1mm}
\begin{align}\label{joint_cost}
&J(u,\theta)=\\\nonumber
&\E\!\left[\E\!\left[\!\E\!\left[\sum_{t=0}^{T-1}\!\left[\!x_t^{\top}Q_1x_t\!+\!u_t^{\top}Ru_t\!+\!\theta_t^\top\Lambda\right]\!+\!x_T^{\top}Q_2x_T\big|\bar{\mathcal{I}}_0\!\right]\!\big|\mathcal{I}_0\!\right]\!\right]
\end{align}
Thus, using the fact that $u_k$ and $\theta_k$ are $\bar{\mathcal{I}}_k$ and $\mathcal{I}_k$ measurable: 
\begin{align*}
\min_{\substack{u_{[0,T-1]}\\\theta_{[0,T-1]}}} J(u,\theta)=\E\left[\min_{\theta_{[0,T-1]}}\E\left[\min_{u_{[0,T-1]}}\E\left[C_0(u,\theta)|\bar{\mathcal{I}_0}\right]|\mathcal{I}_0\right]\right]
\end{align*}
where, $C_k(u,\theta)\!=\!\sum_{t=k}^{T-1}[x_t^{\top}Q_1x_t\!+\!u_t^{\top}Ru_t\!+\!\sum_{i=1}^D\theta_t^i\lambda_i]\!+\!x_T^{\top}Q_2x_T$. Moreover, we define the cost-to-go $J_k^*$ as follows:
\begin{align*}
J_k^*=\min_{\theta_{[k,T-1]}}\E\left[\min_{u_{[k,T-1]}}\E\left[C_k(u,\theta)|\bar{\mathcal{I}_0}\right]|\mathcal{I}_0\right],
\end{align*}
which reduces the optimization problem to the compact form
\begin{align*}
\min_{u_{[0,T-1]}, \;\theta_{[0,T-1]}} J(u,\theta)=\E\left[J_0^*\right].
\end{align*}

It then follows that $C_k(u,\theta)=V_k(u)+\sum_{t=k}^{T-1}\sum_{i=1}^D\theta_t^i\lambda_i$, where $V_k(u)=\sum_{t=k}^{T-1}\left[x_t^{\top}Q_1x_t+u_t^{\top}Ru_t\right]\!+\!x_T^{\top}Q_2x_T$. It is easy to verify that $V_k^*\!=\!\min_{u_{[k,T-1]}}\E\left[V_k(u)|\bar{\mathcal{I}}_k\right]$ is a standard LQG cost-to-go. Having this, we state Theorem \ref{thm:separation}.

\begin{theorem}\label{thm:separation}
Given the information set $\bar{\mathcal{I}}_k$, the optimal control policy $u^\ast_k=g^*_k(\bar{\mathcal{I}}_k)$, $k\!\in\!\{0,\ldots,T\!-\!1\}$, which minimizes $\E[V_k(u)|\bar{\mathcal{I}}_k]$, is a linear feedback law of the form
\begin{equation}\label{opt_u}
u^\ast_k=-(R+B^\textsf{T}P_{k+1}B)^{-1}B^\textsf{T}P_{k+1}A \E[x_k|\mathcal{\bar{I}}_k],
\end{equation} 
where, $P_k$ is the solution of the following Riccati equation:
\begin{align*}
&P_k\!=\!Q_1\!+\!A^\top\!(P_{k+1}\!-\!P_{k+1}B\!\left(\!R\!+\!B^\top \!P_{k+1}B)^{-1}\!\!B^\top \!P_{k+1}\right)\!A,\\
&P_T=Q_2. \nonumber
\end{align*}
Moreover, the optimal cost is $V_k^*=\E[x_k|\mathcal{\bar{I}}_k]^{\T}P_k\E[x_k|\mathcal{\bar{I}}_k]+\pi_k$, where for all $T>t\ge k$, $\pi_k$ is expressed as 
\begin{align}\nonumber
\pi_k&=\E\!\left[e_k^\textsf{T}{P}_ke_k\!+\!\sum\nolimits_{t=k}^{T-1}e_t^\textsf{T}\tilde{P}_te_t|\mathcal{\bar{I}}_k\right]\!+\!\!\sum\nolimits_{t=k+1}^T\!tr(P_tW)
\end{align}
with, $e_k=x_k-\E[x_k|\mathcal{\bar{I}}_k]$, $\tilde{P}_t=Q_1 + A^{\textsf{T}}P_{t+1}A-P_t$.
\end{theorem}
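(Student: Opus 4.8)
The plan is to prove all three claims --- the form of $u_k^\ast$, of $V_k^\ast$, and of $\pi_k$ --- simultaneously by one backward dynamic-programming induction on $k$, running down from $k=T$. First I would record the Bellman recursion for the cost-to-go $V_k^\ast=\min_{u_{[k,T-1]}}\E[V_k(u)\,|\,\bar{\mathcal{I}}_k]$: since the information sets are nested, $\bar{\mathcal{I}}_k\subseteq\bar{\mathcal{I}}_{k+1}$, and $V_k(u)=x_k^\textsf{T}Q_1x_k+u_k^\textsf{T}Ru_k+V_{k+1}(u)$, the standard measurable-selection argument gives $V_k^\ast=\min_{u_k}\E[x_k^\textsf{T}Q_1x_k+u_k^\textsf{T}Ru_k+V_{k+1}^\ast\,|\,\bar{\mathcal{I}}_k]$, the minimum being over $\bar{\mathcal{I}}_k$-measurable $u_k$. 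The base case $k=T$ is immediate: writing $\hat{x}_k\triangleq\E[x_k|\bar{\mathcal{I}}_k]$ and $e_k=x_k-\hat{x}_k$, the cross term vanishes and $V_T^\ast=\hat{x}_T^\textsf{T}Q_2\hat{x}_T+\E[e_T^\textsf{T}Q_2e_T|\bar{\mathcal{I}}_T]$, which matches the claim with $P_T=Q_2$ and both sums in $\pi_T$ empty.

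For the inductive step I would introduce the one-step innovation $\nu_{k+1}\triangleq\E[x_{k+1}|\bar{\mathcal{I}}_{k+1}]-\E[x_{k+1}|\bar{\mathcal{I}}_k]$. Since $\bar{\mathcal{I}}_k$ contains no information about $w_k$ --- it is built from $x_0$, past disturbances, past controls and past/current switching decisions --- we have $\E[w_k|\bar{\mathcal{I}}_k]=0$, hence $\E[x_{k+1}|\bar{\mathcal{I}}_k]=A\hat{x}_k+Bu_k$ and $\E[x_{k+1}|\bar{\mathcal{I}}_{k+1}]=A\hat{x}_k+Bu_k+\nu_{k+1}$ with $\E[\nu_{k+1}|\bar{\mathcal{I}}_k]=0$. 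Substituting the inductive form of $V_{k+1}^\ast$ into the Bellman recursion and expanding $\E[x_k^\textsf{T}Q_1x_k|\bar{\mathcal{I}}_k]$ together with $\E[\E[x_{k+1}|\bar{\mathcal{I}}_{k+1}]^\textsf{T}P_{k+1}\E[x_{k+1}|\bar{\mathcal{I}}_{k+1}]\,|\,\bar{\mathcal{I}}_k]$, every cross term pairing an $\bar{\mathcal{I}}_k$-measurable factor with $e_k$ or with $\nu_{k+1}$ drops out by the projection property of conditional expectation. What survives is the deterministic quadratic $\hat{x}_k^\textsf{T}Q_1\hat{x}_k+u_k^\textsf{T}Ru_k+(A\hat{x}_k+Bu_k)^\textsf{T}P_{k+1}(A\hat{x}_k+Bu_k)$ plus terms independent of $u_k$. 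Completing the square --- the textbook LQG step --- yields (\ref{opt_u}), the value $\hat{x}_k^\textsf{T}P_k\hat{x}_k$ with $P_k$ the stated Riccati iterate, and the residual $\pi_k=\E[e_k^\textsf{T}Q_1e_k|\bar{\mathcal{I}}_k]+\E[\nu_{k+1}^\textsf{T}P_{k+1}\nu_{k+1}|\bar{\mathcal{I}}_k]+\E[\pi_{k+1}|\bar{\mathcal{I}}_k]$.

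The last step is to collapse this recursion for $\pi_k$ into the claimed closed form. From $x_{k+1}=Ax_k+Bu_k+w_k$ and the definitions one obtains the key identity $\nu_{k+1}=Ae_k+w_k-e_{k+1}$; since $e_{k+1}$ is orthogonal to every $\bar{\mathcal{I}}_{k+1}$-measurable vector (so the $\nu_{k+1}$--$e_{k+1}$ cross term drops) and $w_k$ is independent of $(e_k,\bar{\mathcal{I}}_k)$, I would rewrite $\E[\nu_{k+1}^\textsf{T}P_{k+1}\nu_{k+1}|\bar{\mathcal{I}}_k]=\E[e_k^\textsf{T}A^\textsf{T}P_{k+1}Ae_k|\bar{\mathcal{I}}_k]+\Tr(P_{k+1}W)-\E[e_{k+1}^\textsf{T}P_{k+1}e_{k+1}|\bar{\mathcal{I}}_k]$. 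Inserting the inductive hypothesis for $\pi_{k+1}$ and applying the tower property across $\bar{\mathcal{I}}_k\subseteq\bar{\mathcal{I}}_{k+1}$, the two copies of $\E[e_{k+1}^\textsf{T}P_{k+1}e_{k+1}|\bar{\mathcal{I}}_k]$ cancel; then the identity $Q_1+A^\textsf{T}P_{k+1}A=P_k+\tilde{P}_k$ merges the $e_k$-terms and the trace terms telescope, giving exactly the stated $\pi_k$. The one genuinely non-classical point --- and where I expect the most care --- is that delayed measurements make $\E[x_k|\bar{\mathcal{I}}_{k+1}]\neq\E[x_k|\bar{\mathcal{I}}_k]$ in general, so $e_k$ obeys no fixed Kalman-type recursion and $\pi_k$ cannot be reduced to a pre-computable error covariance; the whole induction must be carried at the level of the abstract innovation $\nu_{k+1}$, leaning only on $\nu_{k+1}=Ae_k+w_k-e_{k+1}$ and $w_k\perp\bar{\mathcal{I}}_k$ rather than on any explicit estimator.
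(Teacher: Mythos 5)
Your proposal is correct and follows essentially the same route as the paper's proof in Appendix A: your innovation $\nu_{k+1}$ is exactly the paper's $\xi_{k+1}=\hat{x}_{k+1}-\hat{x}_{k+1}^-$, your key identity $\nu_{k+1}=Ae_k+w_k-e_{k+1}$ is the paper's $\xi_k+e_k=Ae_{k-1}+w_{k-1}$, and the orthogonality-plus-telescoping argument via $Q_1+A^\textsf{T}P_{t+1}A=P_t+\tilde{P}_t$ is identical. The only cosmetic difference is that you phrase it as a formal backward induction with base case $k=T$, whereas the paper posits the quadratic ansatz and verifies it; your closing remark that the delayed-measurement structure forces the argument to stay at the level of the abstract innovation is exactly the point the paper's construction relies on.
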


\begin{proof}
The proof is presented in Appendix \ref{app}.
\end{proof}

From Theorem \ref{thm:separation}, $g_k^*(\bar{\mathcal{I}}_k)\!=\!L_k\E[x_k|\mathcal{\bar{I}}_k]$, with $L_k$ independent of $\theta$. This allows us to design the control law offline, while the estimator is $\theta$-dependent (\textit{Proposition} \ref{Prop_1}).~This is intuitive as $\lambda_i$'s are assumed to be state and time-independent. State and time dependency in prices is the subject of future work. For example when $\lambda_i=x_k^{\T}\mu_i^1x_k+{\mu^2_i}^{\T}x_k+\mu^3_i$, the Riccati equation of $P_k$ in \eqref{Riccati} will depend on the parameters $\mu^1_i,\mu^2_i,\mu^3_i$.  In this work, we restrict ourselves to state-independent, time-independent costs for the links.

\subsection{Optimal Switching Strategy}\label{optimal_switch}
Here we first show that the estimation at the controller is $\theta$-dependent. It results in $e_k$ being also $\theta$-dependent, $\forall k\!>\!0$.

\begin{proposition} \label{Prop_1}
The estimator dynamics is $\theta$-dependent s.t.
\begin{equation}\label{estimation2_1}
\hat{x}_k=\E[x_k|\mathcal{\bar{I}}_k]=\sum\nolimits_{i=1}^{\min\{D,k+1\}} b_{i,k}\E[x_k|x_{k-i},U_{k-1}],
\end{equation}
where, $\forall k\!\ge \!0, i\!\in \!\{1,\cdots,D\}$, $b_{i,k}\!\in \!\{0,1\}$. Moreover, $\sum_{i=1}^{\min\{D,k+1\}}b_{i,k}\!=\!1$, and if $D\!>\!k$, then $\sum_{i=k+2}^Db_{i,k}\!=\!0$. 
\end{proposition}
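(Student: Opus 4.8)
The plan is to characterize the conditional expectation $\E[x_k \mid \bar{\mathcal{I}}_k]$ explicitly by exploiting the structure of the information set $\bar{\mathcal{I}}_k = \{Y_{k-1}, U_{k-1}, \cup_{t=1}^{k-1}\{\theta_t\}, \mathcal{Y}_k, \theta_k\}$. First I would observe that the received-information bundle $Y_{k-1} \cup \mathcal{Y}_k$ consists precisely of the terms $\theta_{k-i}^i x_{k-i}$ for $i \in \{1,\ldots,D\}$ together with the analogous terms from earlier times; since $\theta_{k-i}^i \in \{0,1\}$, each such term either reveals $x_{k-i}$ exactly (when $\theta_{k-i}^i = 1$) or is identically zero and carries no information (when $\theta_{k-i}^i = 0$). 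Because the $\theta$'s themselves are in $\bar{\mathcal{I}}_k$, the decision maker knows which of these measurements are informative. Hence the "fresh" state observations available at time $k$ are the $x_{k-i}$ for which $\theta_{k-i}^i = 1$; and also the knowledge from earlier times, but I would argue that the most recent available state measurement dominates (renders older ones redundant) because $x$ is Markov given the controls and because the controls $U_{k-1}$ are also known.

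Next I would make the Markov/LQG argument precise: conditioned on $U_{k-1}$, the state process $x_{k-j} \to x_{k-j+1} \to \cdots \to x_k$ evolves as a controlled linear-Gaussian chain driven by the independent noises $w_\cdot$. Therefore, if $x_{k-i}$ is the most recent observed state (smallest such $i$ with $\theta_{k-i}^i = 1$), then $\E[x_k \mid \bar{\mathcal{I}}_k] = \E[x_k \mid x_{k-i}, U_{k-1}]$: older observations $x_{k-i'}$ with $i' > i$ add nothing because $x_{k-i}$ together with the known intervening controls is a sufficient statistic, and the $\theta$'s are (by the information structure $\theta_t = s_t(\mathcal{I}_t)$) measurable functions of past received data and controls, hence condition on nothing beyond what is already there. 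This identifies $b_{i,k} = 1$ for that unique smallest index $i$ and $b_{i,k} = 0$ otherwise, giving $b_{i,k}\in\{0,1\}$ and $\sum_i b_{i,k} = 1$. The constraint $\sum_{i=1}^D \theta_{k-i}^i \ge 1$ from \eqref{const1} — or, at the very start of the horizon, the convention $\theta_{-1}^i = \cdots = \theta_{-D}^i = 1$ — guarantees that at least one observation is available, so the sum is exactly $1$ and never $0$. Finally, when $D > k$, any index $i \ge k+2$ would refer to a state $x_{k-i}$ with negative time index, which does not exist; the convention $\theta_{-1}^i = \cdots = \theta_{-D}^i = 1$ makes $x_0$ (equivalently the $i = k+1$ term, using the prior $\Sigma_0$) the oldest admissible observation, so $b_{i,k} = 0$ for all $i \ge k+2$, i.e. $\sum_{i=k+2}^D b_{i,k} = 0$, and the upper limit of the sum is $\min\{D, k+1\}$ as claimed.

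The main obstacle I anticipate is the "older observations are redundant" step: one must be careful that $\E[x_k \mid \{x_{k-i} : \theta_{k-i}^i = 1\}, U_{k-1}]$ really collapses to conditioning on a single $x_{k-i}$. This uses that the collection $\{x_{k-i_1}, x_{k-i_2}, \ldots\}$ with $i_1 < i_2 < \cdots$, given $U_{k-1}$, forms a Markov chain in reverse, so the smallest-delay (most recent) element screens off the rest — but one should double check that the $\theta$ variables, although random, do not create extra correlation, which follows because they are measurable with respect to strictly earlier information ($\theta_t = s_t(\mathcal{I}_t)$ and $\mathcal{I}_t$ depends only on data up to time $t-1$). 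A secondary subtlety is that $\bar{\mathcal{I}}_k$ also contains past received states $\theta_{t-i}^i x_{t-i}$ for $t < k$; these are all of the form $x_{k-j}$ for $j \ge i$ and are therefore subsumed by the same Markov/sufficiency argument. I would organize the write-up as: (i) reduce $\bar{\mathcal{I}}_k$ to the list of known states plus $U_{k-1}$ and the $\theta$'s; (ii) invoke linear-Gaussian Markovianity given controls to show the most recent known state is a sufficient statistic; (iii) set $b_{i,k}$ to the indicator that $i$ is that most-recent index and verify the three stated properties; (iv) handle the boundary $k < D$ via the stated conventions.
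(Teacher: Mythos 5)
Your proposal is correct and follows essentially the same route as the paper's proof: identify the most recently received state (collecting what arrives via $\mathcal{Y}_k$ and what arrived at earlier times), use the controlled linear--Gaussian Markov property to argue it is a sufficient statistic so that $b_{i,k}$ is the indicator of that most-recent index, and handle $k<D$ separately via the stated conventions. The paper merely makes your indicator explicit, writing $b_{i,k}=\prod_{d=1}^{i-1}\prod_{j=1}^{d}(1-\theta^j_{k-d})\,(\vee_{l=1}^{i}\theta^l_{k-i})$ and assigning the residual case ($k<D$, nothing yet received) to the prior term $\E[x_k|\bar{\mathcal{I}}_0,U_{k-1}]$.
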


\begin{proof} Proof is presented in the Appendix \ref{app:2}
\end{proof}


Defining $\tau_k\!\triangleq\!\min\{D,k\!+\!1\}$, and initial condition $e_0\!=\!x_0\!-\!\E[x_0]$, and $w_{-1}\!=\!e_0$ for notational convenience; and
knowing the noise realizations $\{w_{-1},w_0,\cdots,w_{T-1}\}$ are mutually independent, it concludes from Proposition \ref{Prop_1}, that
\begin{align}\label{err-state2}
e_k&=x_k \!- \!\E[x_k|\mathcal{\bar{I}}_k]=\sum\nolimits_{j=1}^{\tau_k}\sum\nolimits_{i=j}^{\tau_k}\!b_{i,k}A^{j-1}w_{k-j},
\end{align}

Defining $M_k=\E[e_ke_k^{\T}~|\bar{\mathcal{I}}_k]$, it is straightforward to show
\begin{align*}
M_k=&\sum\nolimits_{i=1}^{\tau_k}c_{i,k}{A^{i-1}}^\textsf{T}W_{k-i}A^{i-1},
\end{align*}
where $W_{-1}=\E[e_0e_0^{\T}]=\Sigma_0$, $W_0=W_1=\cdots=W$, and $c_{i,k}=\sum_{j=i}^{\tau_k}b_{j,k}$.
Having this, one can show
\begin{align*}
V_k^*&=\hat{x}_k^\textsf{T}P_k\hat{x}_k\!+\!tr(P_kM_k)\!+\!\sum_{t=k}^{T-1}tr(\tilde P_tM_t)\!+\!\!\sum_{t=k+1}^T\!\!tr(P_tW).
\end{align*}
Consequently, we can express $J^*_0$ as follows:
\begin{align} \label{LP}
J^*_0&\!=\!\min_{\theta_{[0,T-1]}}\!\sum\nolimits_{t=0}^{T-1}\!\!\left[\!\sum\nolimits_{i=1}^{\tau_t}\!c_{i,t} tr(\tilde P_t{A^{i-1}}^\textsf{T}W_{t-1}A^{i-1})\!+\!\theta_t^\textsf{T}\Lambda \!\right] \nonumber\\
&+\hat{x}_0^\textsf{T}P_0\hat{x}_0+\sum\nolimits_{t=1}^Ttr(P_tW)+tr(M_0P_0),
\end{align}
Let us define two vectors $\gamma_t$ and $r_t$, as in the following:
\begin{align*}
\gamma_t&\!\triangleq\![c_{1,t},c_{2,t},\cdots, c_{D,t}]^\textsf{T},\\
r_t&\!\triangleq\![tr(\tilde P_tW), tr(\tilde P_tA^\textsf TWA), \cdots, tr(\tilde P_t{A^{D-1}}^\textsf TWA^{D-1})]^\textsf T.
\end{align*}
Since the term $\hat{x}_0^\textsf{T}P_0\hat{x}_0+\sum_{t=1}^Ttr(P_tW)+tr(M_0P_0)$ in (\ref{LP}), is independent of $\theta_{[0,T-1]}$, minimizing \eqref{LP} is equivalent to
\begin{align}\label{LP2}
\tilde{J}_0^*=\min_{\theta_{[0,T-1]}}\sum\nolimits_{t=0}^{T-1}\left[\gamma_t^\textsf Tr_t+\theta_t^\textsf{T}\Lambda \right].
\end{align}
After defining $(\gamma_k)_i$ to be the $i$-th component of the vector $\gamma_k$, the optimal strategy $\theta^*_{[0,T-1]}$ is the solution of the following mixed integer nonlinear programming (MINP):
\vspace{-.5mm}
\begin{align} \label{MINP}
\min_{\theta_{[0,T-1]}} ~~&\sum\nolimits_{k=0}^{T-1}\left[\theta_k^\textsf{T}\Lambda+\gamma_k^\textsf{T}r_k\right] \\\nonumber
\text{subject to} ~~& (\gamma_k)_i=\sum\nolimits_{j=i}^Db_{j,k}, \quad \sum\nolimits_{i=1}^D\theta_k^i\ge 1\\\nonumber
&b_{i,k}= \prod\nolimits_{d=1}^{i-1}\prod\nolimits_{j=1}^{d}(1-\theta^j_{k-d})( \vee_{l=1}^i\theta_{k-i}^l)\\\nonumber
&\sum\nolimits_{i=1}^{\tau_k}\!\!b_{i,k}=1, \quad \sum\nolimits_{i=k+2}^D \!b_{i,k}=0 \\\nonumber
&b_{i,k}\!\in \!\{0,1\},\theta_k^i \!\in \!\{0,1\}, ~ \forall k\!\in \![0,T\!-\!1], i\!\in \![1,D],
\end{align}
\begin{remark}
The optimal switching strategy $\theta^*_{[0,T-1]}$ is independent of the noise realizations and can be solved offline. This result is analogous to the conclusions of \cite{logothetis1999sensor}, wherein the optimal sensor schedule for a delay-free open-loop control system with linear Gaussian-disturbed sensors is shown to be independent of the Gaussian noise realizations.
\end{remark}

To significantly reduce the computational complexity of the MINP (\ref{MINP}), we show that the derived MINP can be equivalently re-casted as a mixed integer linear program\footnote{There are efficient algorithms and solvers for MILPs, whereas often the LP relaxation of the MILP results in a solution close to the optimal.}
 (MILP), by exploiting certain structure of the specific network setting. This will significantly reduce the computational burden as well as might provide a way to further relaxed it to a linear programming (LP) problem.  
For this, by replacing $\sum\nolimits_{i=1}^D\theta_k^i\ge 1$ with $\sum\nolimits_{i=1}^D\theta_k^i= 1$ enables us to replace $\vee_{l=1}^i\theta_{k-i}^l$ in (\ref{MINP}) by $\sum_{l=1}^i\theta_{k-i}^l$. Thus,
\vspace{-2mm}
\begin{align} \label{MILNP}
\min_{\theta_{[0,T-1]}} ~~&\sum\nolimits_{t=0}^{T-1}\left[\theta_t^\textsf{T}\Lambda+\gamma_t^\textsf{T}r_t\right] \\\nonumber
\text{subject to} ~~& (\gamma_t)_i=\sum\nolimits_{j=i}^Db_{j,t}, \quad \sum\nolimits_{i=1}^D\theta_t^i= 1\\\nonumber
&b_{i,t}= \prod\nolimits_{d=1}^{i-1}\prod\nolimits_{j=1}^{d}(1-\theta^j_{t-d})( \sum_{l=1}^i\theta_{t-i}^l)\\\nonumber
&\sum\nolimits_{i=1}^{\tau_k}b_{i,t}=1, \quad \sum\nolimits_{i=t+2}^D \!b_{i,t}=0 \\\nonumber
&b_{i,t}\!\in \!\{0,1\},\theta_t^i \!\in \!\{0,1\}, ~ \forall t\!\in \![0,T\!-\!1], i\!\in \![1,D].
\vspace{-2mm}
\end{align}
Clearly, due to the conversion of an inequality constraint to an equality constraint, every feasible solution of \eqref{MILNP} is a feasible solution for \eqref{MINP}, and moreover the optimal value for \eqref{MILNP} is no less than that of \eqref{MINP}. Therefore, we only need to show that an optimal solution for \eqref{MINP} is a feasible solution for \eqref{MILNP}. To show this, we first claim that every $\theta$ which is feasible for \eqref{MINP} but not for \eqref{MILNP} (i.e. $\sum\nolimits_{i=1}^D\theta_k^i> 1$ for some $k$), there exists a $\tilde \theta$ which achieves a strictly lesser cost than $\theta$.
Let $1\!\le \!i_1\!<\!i_2\!<\!\cdots\!<\!i_m \!\le \!D$ be the indices such that $\theta_k^{i_n}\!=\!1$. Now we construct a new $\tilde{\theta}_{k}$ such that $\tilde \theta_k^{i_1}\!=1$, and  $\tilde \theta^{j}_k\!=0$, for all $j\ne i_1$. Thus, $\sum_{i=1}^D\tilde \theta_k^i\!=\!1$, whereas, $\sum_{i=1}^D\theta_k^i\!>\!1$. This is done for each $k$ such that $\sum\nolimits_{i=1}^D\theta_k^i> 1$.
It can be verified that the cost $\sum\nolimits_{t=0}^{T-1}\gamma_t^{\T}r_t$ remains the same while using $\theta_{[0,T-1]}$ or $\tilde{\theta}_{[0,T-1]}$; whereas $\sum\nolimits_{t=0}^{T-1}\theta_t^{\T}\Lambda > \sum\nolimits_{t=0}^{T-1}\tilde{\theta}_t^{\T}\Lambda$. Thus the optimal solution of \eqref{MINP} must be the optimal solution of \eqref{MILNP}. 
Relaxing the equality constraint of $b_{i,t}$ as $b_{i,t} \!\le\!( \sum_{l=1}^i\theta_{t-i}^l)$ results in the following MILP which is equivalent to \eqref{MINP}:
\begin{align} \label{MILP}
\min_{\theta_{[0,T-1]}} ~~&\sum\nolimits_{t=0}^{T-1}\left[\theta_t^\textsf{T}\Lambda+\gamma_t^\textsf{T}r_t\right] \\\nonumber
\text{subject to} ~~& (\gamma_t)_i=\sum\nolimits_{j=i}^Db_{j,t}, \quad b_{i,t} \le  \sum\nolimits_{l=1}^i\theta_{t-i}^l\\\nonumber
&\sum\nolimits_{i=1}^D\!\theta_t^i= 1, \;\sum\nolimits_{i=1}^{\tau_k}\!b_{i,t}=1, \;\sum\nolimits_{i=t+2}^D\!b_{i,t}=0\\\nonumber
&b_{i,t}\!\in \!\{0,1\},\theta_t^i \!\in \!\{0,1\}, ~ \forall t\!\in \![0,T\!-\!1], i\!\in \![1,D].
\end{align}

Problem \eqref{MILP} is a relaxed version of (\ref{MILNP}), therefore, any optimal solution of \eqref{MILP} is also an optimal solution of \eqref{MILNP} if it is a feasible solution for \eqref{MILNP}. At this point, it is trivial to verify that the optimal solution of \eqref{MILP} is a feasible (and hence optimal) solution for \eqref{MILNP}, and hence optimal for \eqref{MINP}.

\vspace{-.7mm}

\subsection{Communication Cost as a Constraint}\label{subsec:budget}

So far, we have considered the cost function of the form
\begin{align*}
J=\min\nolimits_{u,\theta}\E[J_{LQG}+J_{Comm}],
\end{align*}
where, 
$J_{Comm}$ is the communication cost. There are equivalent formulations of the this problem depending on the specific NCSs setup, e.g., constraint optimization problem:  
\begin{align*}
 \min_{u_{[k,T-1]},\;\theta_{[0,T-1]}} &\;\E\!\left[\sum\nolimits_{t=0}^{T-1}\!x_t^{\textsf{T}}Q_1x_t\!+\!u_t^{\textsf{T}}Ru_t\!+\!x_T^{\textsf{T}}Q_2x_T\!\big|\mathcal{I}_k\right], \\
\text{s.t.} &\;\E\left[\sum\nolimits_{t=0}^{T-1}\theta^{\T}_t \Lambda\right] \le b,
\end{align*}
 where  $b\in \mathbb{R}^+$  is the budget; or, a bi-objective problem:
\begin{align}\label{bi_obj_opt}
\min_{u_{[k,T-1]},\;\theta_{[0,T-1]}} &\left\{f_1(u,\theta), f_2(u,\theta) \right\},
\end{align}
with $f_1=\E\!\left[\sum_{t=0}^{T-1}\!x_t^{\textsf{T}}Q_1x_t\!+\!u_t^{\textsf{T}}Ru_t\!+\!x_T^{\textsf{T}}Q_2x_T\!\right] $, and $f_2=\E\left[\sum_{t=0}^{T-1}\theta_t^{\T} \Lambda\right]$. 
The solution of the bi-objective problem is characterized by Pareto frontier. Looking at Pareto curve for the section $f_2\!\le \!b$, one obtains the solution of the constrained budget problem. Moreover, solving the constrained budget problem for all $b \ge 0$, the Pareto frontier for the bi-objective problem is obtained. 
\begin{lemma} \label{L:pareto}
Consider the multi-objective problem
\begin{align}\label{E:mult}
\min_{s}\{f_1(s),f_2(s),\cdots,f_m(s)\},
\end{align}
and the equivalent weighted cost problem:
\begin{align}\label{E:mult1}
\min_s&\sum\nolimits_{i=1}^m\alpha_if_i(s), \quad \text{s.t.} \;\sum\nolimits_{i=1}^m\alpha_i=1;~~ \alpha_i\ge 0,
\end{align}
then $s^*$ is a Pareto point  for \eqref{E:mult} if and only if $s^*$ is the solution of \eqref{E:mult1} for some $\{\alpha_i\}$.
\end{lemma}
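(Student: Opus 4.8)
The plan is to prove the ``if and only if'' by treating the two implications separately; the reverse implication (scalarization solution $\Rightarrow$ Pareto point) is elementary, whereas the forward one (Pareto point $\Rightarrow$ scalarization solution) carries the analytic content and will need a supporting-hyperplane argument. Throughout I write $\mathcal S$ for the feasible set of \eqref{E:mult} and $f=(f_1,\dots,f_m)$.

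For the direction ``$s^*$ solves \eqref{E:mult1} $\Rightarrow$ $s^*$ is Pareto for \eqref{E:mult}'', I would argue by contradiction. If $s^*$ were dominated, there would be a feasible $s'$ with $f_i(s')\le f_i(s^*)$ for all $i$ and $f_j(s')<f_j(s^*)$ for some $j$; multiplying by the nonnegative weights $\alpha_i$ and summing gives $\sum_i\alpha_i f_i(s')\le \sum_i\alpha_i f_i(s^*)$, so $s'$ is also a minimizer of \eqref{E:mult1}. Since $s^*$ is taken to be \emph{the} minimizer, this forces $s'=s^*$, contradicting $f_j(s')\neq f_j(s^*)$. (If every $\alpha_i>0$ the summed inequality is already strict and no uniqueness is needed.)

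For the converse, I would apply the supporting-hyperplane theorem to the ``upper image'' set $\mathcal F \triangleq f(\mathcal S)+\mathbb R^m_{\ge 0}=\{\,y\in\mathbb R^m:\ y_i\ge f_i(s)\ \forall i\ \text{for some }s\in\mathcal S\,\}$, which I would assume convex (as is standard for this equivalence, e.g.\ when each $f_i$ is convex on a convex $\mathcal S$, the regime in which \eqref{E:mult1} is a meaningful surrogate). The steps, in order: (i) show $f(s^*)\in\partial\mathcal F$, since Pareto optimality of $s^*$ rules out any feasible $s$ with $f(s)<f(s^*)$ coordinatewise, so $f(s^*)$ admits no open neighbourhood inside $\mathcal F$; (ii) obtain a nonzero $\alpha\in\mathbb R^m$ with $\alpha^\top y\ge \alpha^\top f(s^*)$ for all $y\in\mathcal F$; (iii) use $f(s^*)+t e_i\in\mathcal F$ for all $t\ge 0$ to force $\alpha_i\ge 0$, then normalize by $\sum_i\alpha_i>0$ so that $\sum_i\alpha_i=1$; (iv) since $f(s)\in\mathcal F$ for every feasible $s$, conclude $\sum_i\alpha_i f_i(s)\ge \sum_i\alpha_i f_i(s^*)$, i.e.\ $s^*$ solves \eqref{E:mult1} with these weights.

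The main obstacle is the convexity of $\mathcal F$ underpinning steps (ii)--(iv): without it the supporting hyperplane at $f(s^*)$ need not exist and the converse can genuinely fail (non-convex trade-off fronts have ``unsupported'' points), so Lemma~\ref{L:pareto} should be read under that qualification, which is met here on the convexified trade-off curve. A secondary point to handle carefully is the degenerate case in step (iii) where some $\alpha_i=0$: then $s^*$ is a priori only \emph{weakly} Pareto, and full Pareto optimality is recovered either from uniqueness of the minimizer of \eqref{E:mult1} or by an arbitrarily small perturbation of the weight vector.
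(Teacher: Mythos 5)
The paper itself gives no proof of Lemma~\ref{L:pareto} --- it is stated as a known fact from multi-objective optimization --- so there is no ``paper route'' to compare against; your job here is essentially to supply the missing argument, and what you propose is the standard scalarization proof. Your easy direction is fine, and you are right to note that with some $\alpha_i=0$ a minimizer of \eqref{E:mult1} is a priori only weakly Pareto unless it is the unique minimizer (the paper's phrasing ``the solution'' implicitly leans on that uniqueness). Your hard direction via a supporting hyperplane to $\mathcal F=f(\mathcal S)+\mathbb R^m_{\ge 0}$ is the correct classical argument, and your flag that it requires convexity of $\mathcal F$ is not a pedantic caveat but a genuine gap in the lemma as stated: in this paper the decision variable includes the binary switching sequence $\theta_{[0,T-1]}$, so the feasible set is finite/discrete, the image $f(\mathcal S)$ is a finite point cloud, and its Pareto front can contain \emph{unsupported} points that no choice of weights $\{\alpha_i\}$ recovers --- exactly the failure mode you describe. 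Weighted scalarization over $\alpha\in[0,1]$ therefore traces only the convex hull of the Pareto front, which is consistent with how the paper actually uses the lemma (sweeping $\alpha$ in \eqref{E:weightedProblem} to draw the Pareto curve) but weaker than the stated ``if and only if.'' One small tightening worth making in step (i): Pareto optimality rules out $f(s)\le f(s^*)$ with one strict inequality, which is more than enough to place $f(s^*)$ on the boundary of $\mathcal F$, but the supporting-hyperplane theorem only needs $f(s^*)\notin\operatorname{int}\mathcal F$, and weak Pareto optimality already gives that; the distinction matters only for which notion of optimality you recover in step (iv).
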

The Pareto frontier for (\ref{bi_obj_opt}) can be constructed by optimizing the single objective function 
\begin{align} \label{E:weightedProblem}
\!\E\!\left[\sum_{t=0}^{T-1}\!\alpha\!\left[x_t^{\textsf{T}}Q_1x_t\!+\!u_t^{\textsf{T}}Ru_t\!+\!x_T^{\textsf{T}}Q_2x_T\right]\!\!+(1\!-\!\alpha)\theta^{\T}_t \Lambda\right]\!,
\end{align}
for all $\alpha\in [0,1]$. Note that \eqref{E:weightedProblem} is equivalent to \eqref{joint_cost} which can be solved following the discussion presented here.

\section{Simulation Results}\label{sim_results}
\subsection{Example 1: Unstable Dynamics}
Consider an NCS with unstable dynamics as:
\begin{align*}
x_{t+1}=\begin{bmatrix}
1.01 & 0\\0 &1
\end{bmatrix}x_t
+\begin{bmatrix}
0.1 &0\\0 & 0.15
\end{bmatrix}u_t+\sqrt{1.5}w_t
\vspace{-2mm}
\end{align*}
where, $w_t, x_0\sim \mathcal{N}(0,\mathbb{I}_2)$. 
The horizon $T$ is set to be $100$. There are 5 links with delays ranging from $1$ to $5$ time-steps and the corresponding prices are  $[20,13, 8, 2, 1]$. The optimal utilization of the links is shown in Fig.~\ref{plot1}.
\begin{figure}
\includegraphics[width=0.45 \textwidth, height= 3 cm]{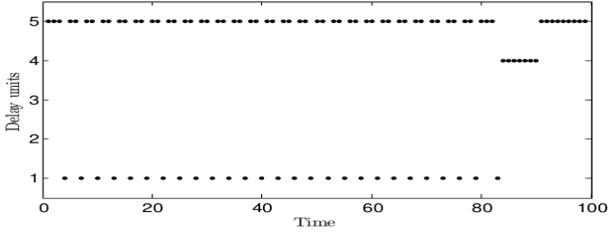}
\vspace{-0mm}\caption{Optimal utilization of the links} \label{plot1}
\vspace{-0mm}
\end{figure}
\begin{figure}
\begin{subfigure}[t]{0.241 \textwidth}
\centering
\includegraphics[width=\textwidth]{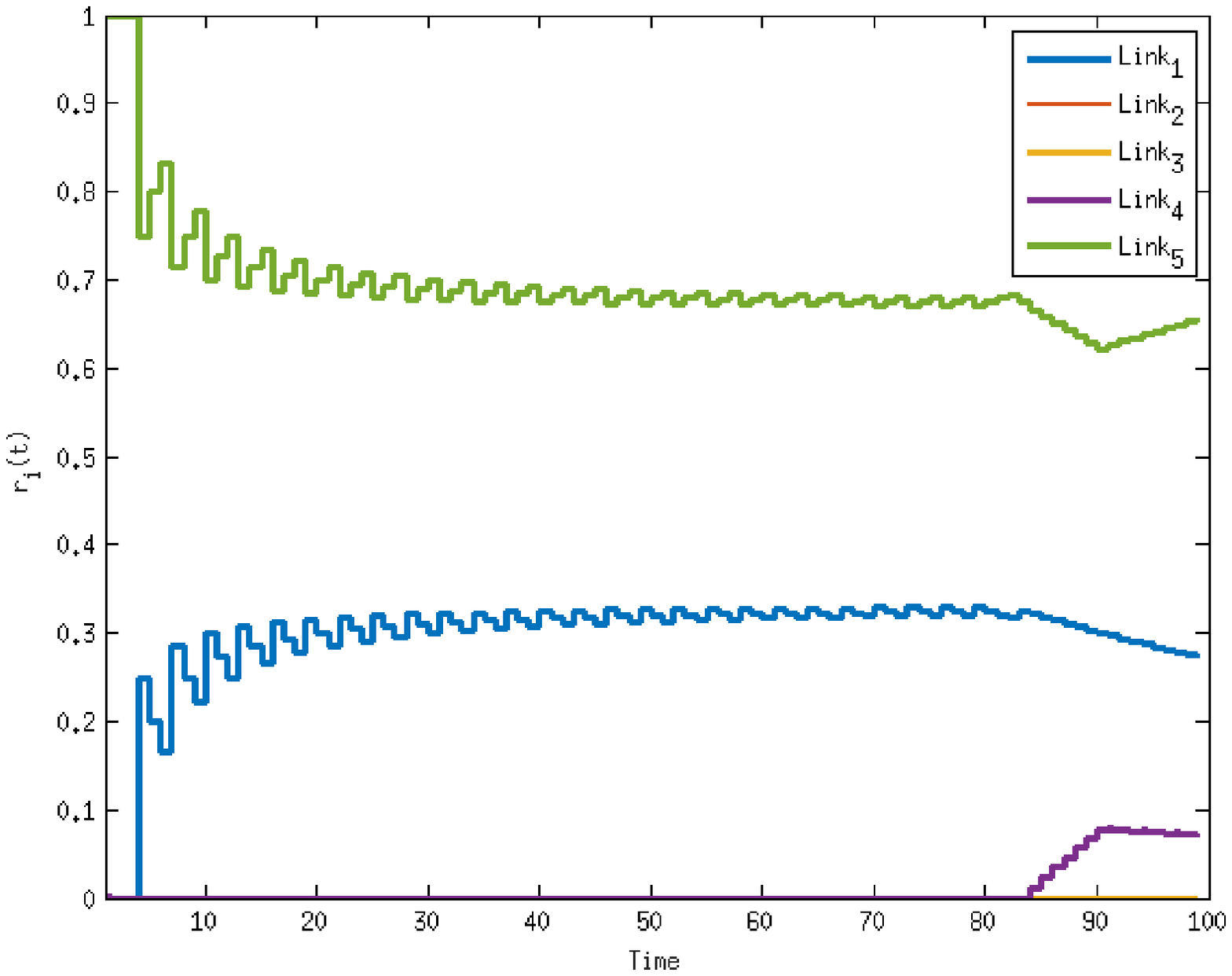}
\vspace{-0mm}\subcaption{Utilizations of different links over time: $\rho_i(t)$} \label{plot2}
\end{subfigure}
\begin{subfigure} [t]{0.24 \textwidth}
\centering
\includegraphics[width=\textwidth]{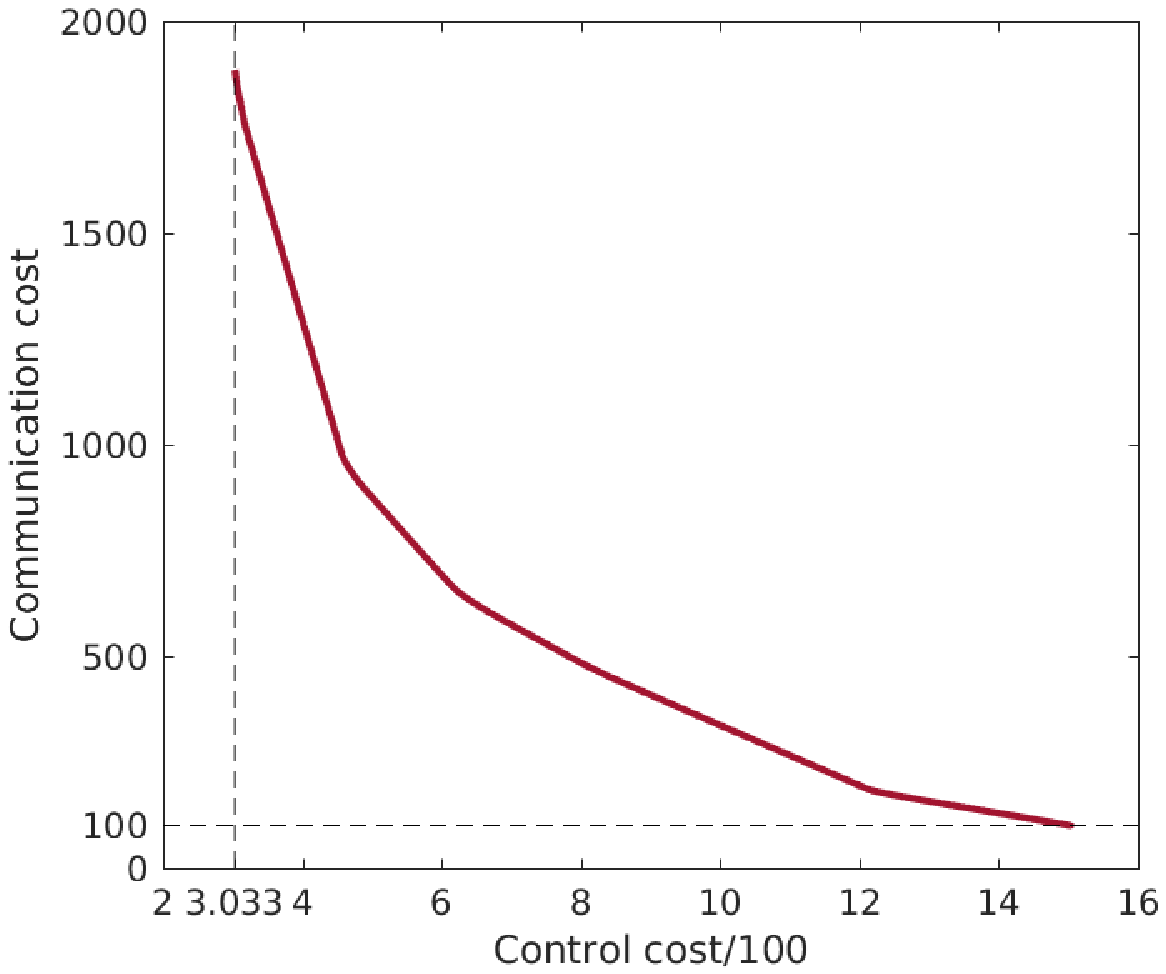}
\vspace{-0mm}\subcaption{Pareto front of the bi-objective problem with $\Lambda=[20 ~13 ~8 ~2~1]$. 
} \label{plot3}
\end{subfigure}
\end{figure}
For this choice of the parameters the network mainly uses the fastest (link $1$) and the slowest (link $5$) links. Only for few instances, the system utilizes the link $4$ and the rest of the links are not used. Thus, we note that the measurements sent by the Link 5 is never used in estimation except towards the end. Thus, the system can remain open-loop for most of the time. 

To assure our simulation setup accuracy, we set $\lambda_i\!=\!0$ for all links, and we observe that only the fastest link is selected. Similarly, setting $\lambda_i \!\gg \!1$, the system selects the slowest link, as the communication cost is exorbitantly high compared to the LQG cost. Similar profile is observed for all~$\Lambda$, when disturbance is removed, and system becomes deterministic, so the only observation required is the initial state, and no need to send any measurement at all. However, the constraint $\sum_{i=1}^D\theta_k^i\!\ge \!1$, forces the system to select the slowest link.

Let $\rho_i(t)$ be defined as $$\rho_i(t)=\frac{\text{total utilization number of link} ~i}{t}.$$ 
In Fig. \ref{plot2}, 
we observe that mostly two of the links (fastest and slowest) are utilized, while the rest are hardly used. This behavior is linked with the structure of the MILP \eqref{MILP}, and studying it is beyond the scope of this article. However, this raises an interesting question for multiple systems scenario: How could the links be distributed among sub-systems so that the link utilization is fair? Also, we observe that $\rho_i(t)$ is very sensitive to the variations of $\lambda_i$. The design of prices $\Lambda$, as a time-varying or state-dependent variable, to achieve a desired utilization profile, is the subject of our future study. 

In Fig. \ref{plot3} we show the Pareto frontier of the bi-objective problem defined in (\ref{bi_obj_opt}). We notice that the minimum LQG cost (with fastest link being always selected) achievable for this set of parameters is 303.3 and the maximum LQG cost (with cheapest link being always selected) is 1503. The minimum communication cost is 100 (since cheapest link cost =1) which is associated with the maximum LQG cost.

\subsection{Example 2: Stable dynamics}
\begin{align*}
\dot x=\begin{bmatrix}
0.5 & 0.05 \\0.5 & 0.9
\end{bmatrix}x+\begin{bmatrix}
0.1&0.01 \\ 0.05 & 0.15
\end{bmatrix}u+\sqrt{1.5}w_t
\end{align*}
In this setting, we similarly choose a network with $5$ possible delay links (delays are 1,2,3,4,5 time steps.) with the costs: $[10,~ 8,~ 2.5,~ 1.5,~ 1]$, such that lower delays are assigned with higher costs. The network utilization is shown in Figure \ref{F:1} for this system, which follows the similar pattern as in the previous example.

\begin{figure}
\begin{subfigure}[t]{0.241 \textwidth}
\includegraphics[width=\textwidth]{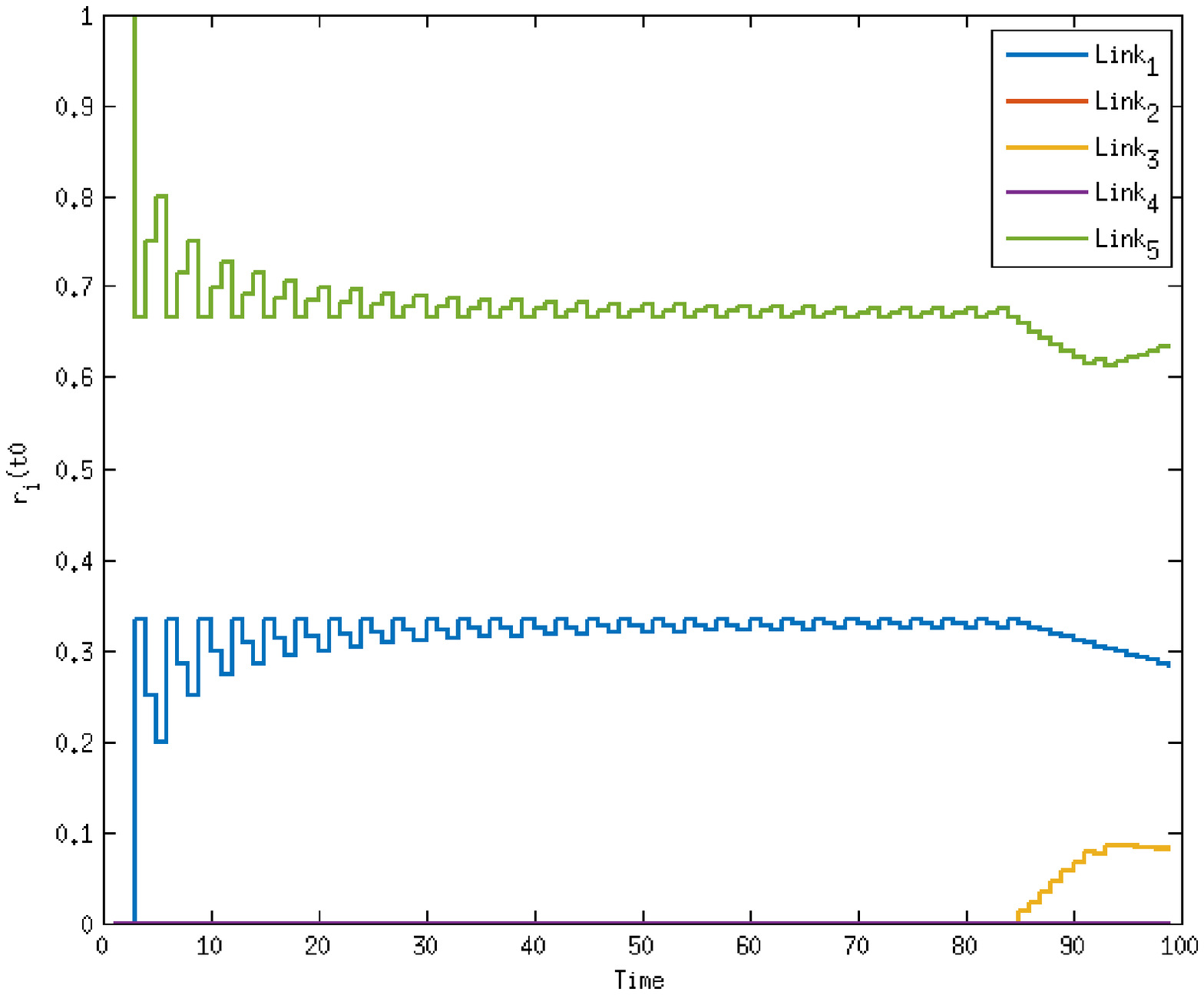} 
\end{subfigure}
\begin{subfigure}[t]{0.24 \textwidth}
\includegraphics[width=\textwidth]{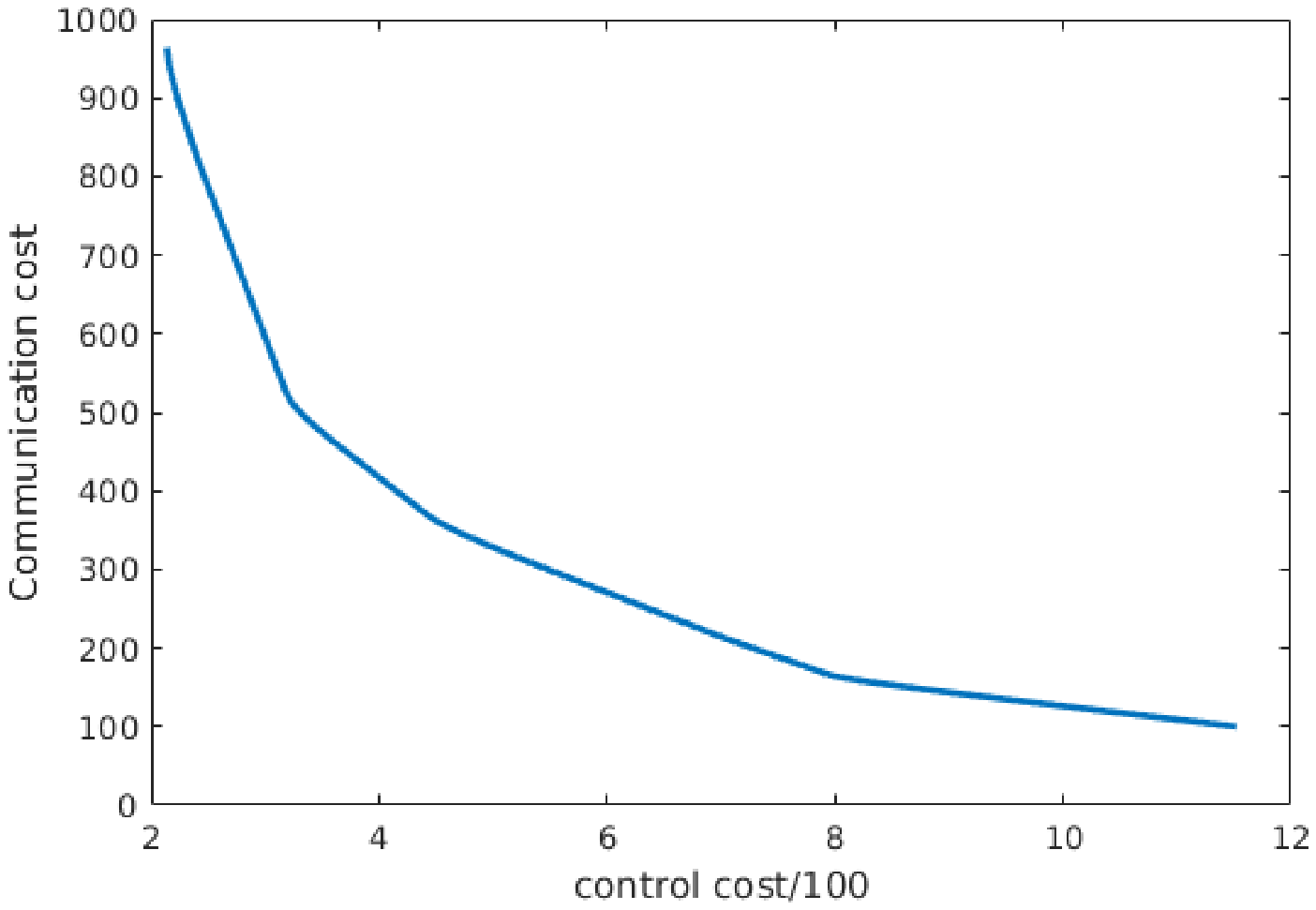}
\end{subfigure}
\caption{First: network utilizatin over time $t$, Second: Pareto curve of the bi-objective problem}\label{F:1}
\end{figure}

\section{Conclusion}\label{conclusion}
In this article we address the problem of joint optimal LQG control and delay switching strategy in an NCS with a single stochastic LTI system. Assuming that the network utilization incurs cost, i.e. transmission with shorter delay is more costly, we derive the optimal delay switching profile. The overall cost function consisting of the LQG cost plus communication cost, is shown to be decomposable in expectation assuming \textit{apriori} known prices. Having the separation property, the optimal laws can be computed offline as the solutions of an algebraic Riccati equation for the optimal control law, and a MILP, for the optimal switching profile. 
\section{Discussion}\label{discussion}
\subsection{Stability Analysis}
First, it is worth mentioning that in this work we study the finite horizon optimal design of control and transmission policies, and due to the linearity of the system dynamics, it is ensured that the state of the system remains bounded in expectation over any finite horizon time duration, if the cost of communication is finite. Therefore, talking about stability, which is an asymptotic system property, we look into the infinite time horizon. To do this, as we will discuss in the following (see Lemma \ref{lemm:convergenceRiccati}), we consider the designed strategies in their limit ($k\rightarrow \infty$) and show that under finite prices for communication, the system is asymptotically stable in mean-square sense.

We study stability of the described control system in infinite horizon under the steady-state optimal control and transmission policies, i.e. the limit of the strategies $(u^\ast_k,\theta_k^\ast)$, derived in the manuscript. It is straightforward to express the evolution of the system state $x_k$, as follows:
\begin{align}\label{eq:sys_state}
x_{k+1}&=(A-BL_k)x_k+BL_ke_k+w_k,
\end{align}
where, the estimation error $e_k$ is defined in section III-A, and $L_k=\left(R+B^\textsf{T}P_{k+1}B\right)^{-1}B^\textsf{T}P_{k+1}A$. Due to the existence of exogenous stochastic disturbance , we employ the concept of Lyapunov mean-square stability (LMSS), to investigate stability properties of the closed-loop system (\ref{eq:sys_state}). Let us first define the notion of LMSS, as follows.
\begin{definition}[\cite{c1}]\label{def:LMSS}\normalfont
An LTI system with state vector $X_k$ is said to be Lyapunov mean-square stable (LMSS) if given $\varepsilon \!>\!0$, there exists $\rho(\varepsilon)$ such that $\|X_0\|_2\!<\!\rho$ implies 
\begin{equation}\label{LMSS}
\sup_{k\geq 0}\textsf{E}\left[\|X_k\|_2^2\right]\leq \varepsilon.
\end{equation}
\end{definition}

From (9), one can see that the evolution of $e_k$ is independent of the system state $x_k$, and is only dependent on the noise realizations, the system matrix $A$, and the transmission policy $\theta_{[k-D,k-1]}$. Therefore, if it is shown that the optimal control gain $L_k$ ensures asymptotic stability of the closed-loop system $x^c_{k+1}=(A-BL_k)x_k^c$, where $x_k^c$ is the state of the system (\ref{eq:sys_state}) in ideal case, i.e. assuming that no transmission delay exists, then LMSS is reduced to showing that the quadratic norm of the error state $e_k$ is, in the limit, bounded in expectation. Before stating the main stability result, we revisit the following definition and Lemma:

\begin{definition}[\cite{c2}]\label{def:UASL}\normalfont
A dynamical system $x_{k+1}\!=\!f(x_k,k)$, $x_0=x(0)$, is said to be \textit{uniformly asymptotically stable in large} (UASL) with respect to $x^\ast$ if the followings hold
\begin{itemize}
\item[i.] given $\varepsilon >0$, there exists $\delta >0$ such that $\|x^\ast-x_0\|\leq \delta$ implies that $\|x_k-x^\ast\|\leq \varepsilon$, for any $k>0$,
\item[ii.] given $\delta >0$, there exists $\varepsilon >0$ such that $\|x^\ast-x_0\|\leq \delta$ implies that $\|x_k-x^\ast\|\leq \varepsilon$, for any $k>0$.
\end{itemize}
\end{definition}

\begin{lemma}[\cite{c2}]\label{lemm:convergenceRiccati}\normalfont
Let $P_k$ be the solution of the following discrete Riccati operator equation
\begin{align*}
&P_k=Q_1+A^\textsf{T}\!\left(\!P_{k+1}-P_{k+1}B\left(R+B^\textsf{T}P_{k+1}B\right)^{-1}\!B^\textsf{T}P_{k+1}\!\right)\!A,
\end{align*}
where, $Q_1=Q_1^{\frac{1}{2}^\textsf{T}}Q_1^{\frac{1}{2}}$. If $(A,B)$ is stabilizable, and $(A,Q_1^{\frac{1}{2}})$ detectable, then $P_k$ converges, in the operator norm, as $k\rightarrow \infty$, to a positive operator $P$ that is the unique positive solution to the associated algebraic Riccati equation
\begin{align*}
P=Q_1+A^\textsf{T}\!\left(\!P-PB\left(R+B^\textsf{T}PB\right)^{-1}\!B^\textsf{T}P\!\right)\!A.
\end{align*}
In addition, the control and state generated by 
\begin{align*}
x_{k+1}&=Ax_k+Bu_k, \quad x_0=x(0),\\
u_k&=-\left(R+B^\textsf{T}PB\right)^{-1}\!B^\textsf{T}PA x_k,
\end{align*}
is uniformly asymptotically stable in large (UASL) with respect to the origin $x^\ast=0$.
\end{lemma}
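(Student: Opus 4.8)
The plan is to prove this classical convergence-of-Riccati-iterates result in two stages: (i) convergence of the backward Riccati recursion to a fixed point $P$ of the algebraic Riccati equation (ARE), and (ii) uniform asymptotic stability of the closed loop generated by the steady-state gain. For stage (i) the natural tool is the value-function interpretation of the recursion. Running the recursion for $N$ stages from the zero terminal cost, and writing $P^{(N)}$ for the resulting matrix, one has $x^\textsf{T}P^{(N)}x = \min \E[\sum_{t=0}^{N-1}(x_t^\textsf{T}Q_1x_t + u_t^\textsf{T}Ru_t) \mid x_0 = x]$. Two facts are then immediate: $P^{(N)} \preceq P^{(N+1)}$, since a longer horizon only adds nonnegative stage costs, and $P^{(N)} \succeq 0$, since $Q_1,R \succeq 0$. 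An upper bound comes from stabilizability: fixing a gain $K$ with $A-BK$ Schur and evaluating the cost of the suboptimal feedback $u_t = -Kx_t$ produces a finite, $N$-independent matrix $\bar P$ (the solution of a discrete Lyapunov equation) with $P^{(N)} \preceq \bar P$. A bounded monotone sequence of symmetric positive-semidefinite matrices converges, so $P^{(N)} \to P$; since $R \succ 0$ keeps $R+B^\textsf{T}P_{k+1}B$ invertible, the right-hand side of the recursion is continuous at the limit, and passing to the limit shows $P$ solves the ARE. A standard contraction argument --- the difference of two Riccati flows started from different terminal costs decays once a stabilizing gain is in force --- shows the limit is independent of the particular terminal cost $Q_2 \succeq 0$, which is what makes the statement meaningful for the paper's recursion with $P_T = Q_2$.

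For positivity and uniqueness of $P$, and for the stability claim, I would pass to the closed-loop form of the ARE. With $L = (R+B^\textsf{T}PB)^{-1}B^\textsf{T}PA$ and $A_{cl} = A-BL$, the ARE rearranges to the identity $P = Q_1 + L^\textsf{T}RL + A_{cl}^\textsf{T}PA_{cl}$. Iterating this along the free trajectory $x_{k+1} = A_{cl}x_k$ yields $x_0^\textsf{T}Px_0 = \sum_{k=0}^{\infty} x_k^\textsf{T}(Q_1 + L^\textsf{T}RL)x_k$ whenever the series converges; boundedness of the left side together with $R \succ 0$ forces $Lx_k \to 0$ and $Q_1^{1/2}x_k \to 0$, and detectability of $(A,Q_1^{1/2})$ then forces $x_k \to 0$. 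Hence $A_{cl}$ is Schur, the closed-loop-form identity becomes a discrete Lyapunov equation with the unique solution $P = \sum_{k \ge 0}(A_{cl}^\textsf{T})^k(Q_1 + L^\textsf{T}RL)A_{cl}^k \succeq 0$; strict positivity and uniqueness of the stabilizing ARE solution then follow from the same detectability argument applied to candidate kernel vectors and to the difference of two stabilizing solutions.

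The UASL assertion is immediate once $A_{cl}$ is known to be Schur. Taking $V(x) = x^\textsf{T}Px$ as a Lyapunov function for $x_{k+1} = A_{cl}x_k$, the closed-loop-form identity gives $V(x_{k+1}) - V(x_k) = -x_k^\textsf{T}(Q_1 + L^\textsf{T}RL)x_k \le 0$, so $V(x_k)$ is nonincreasing and, $A_{cl}$ being Schur, $V(x_k) \to 0$, hence $x_k \to 0$. Combining this with the sandwich $\lambda_{\min}(P)\|x_k\|^2 \le V(x_k) \le V(x_0) \le \lambda_{\max}(P)\|x_0\|^2$ delivers both the $\varepsilon$-$\delta$ and the $\delta$-$\varepsilon$ estimates of Definition \ref{def:UASL}. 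The operator-norm phrasing in the statement is merely the infinite-dimensional version of matrix convergence; in the finite-dimensional LTI setting of this paper it is ordinary convergence of $P^{(N)}$.

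The main obstacle is the step that upgrades ``$V$ nonincreasing'' to genuine asymptotic stability, i.e. proving $A_{cl}$ is Schur. This is exactly where detectability of $(A,Q_1^{1/2})$ is essential: it requires the LaSalle-type argument that a free trajectory confined to the unobservable subspace of $(A,Q_1^{1/2})$ and annihilated by $L$ must be identically zero. Establishing strict positivity and uniqueness of $P$ is of the same character and is the other delicate point; the remaining ingredients --- monotonicity, the upper bound via a stabilizing gain, and continuity at the limit --- are routine.
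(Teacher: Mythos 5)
The paper does not actually prove this lemma: it is imported verbatim from the cited reference \cite{c2} and used as a black box in the stability discussion of Theorem \ref{thm:stability}, so there is no in-paper argument to compare yours against. Your proof is the standard textbook route --- monotone convergence of the zero-terminal-cost iterates $P^{(N)}$, bounded above via any stabilizing gain and a discrete Lyapunov equation, passage to the limit in the recursion (legitimate because $R\succ 0$ keeps the inverse continuous), the closed-loop rewriting $P=Q_1+L^\textsf{T}RL+A_{cl}^\textsf{T}PA_{cl}$, and the detectability argument forcing free closed-loop trajectories to zero --- and it is sound in outline. The two steps you flag as delicate (the LaSalle-type detectability step, which must also absorb the vanishing perturbation $BLx_k$ when invoking detectability of $(A,Q_1^{1/2})$ along the $A_{cl}$-trajectory, and the independence of the limit from the terminal cost $Q_2$) are indeed where the real work lies, and your sketches of them are the right ones.

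One correction and one caution. Strict positivity of $P$ does \emph{not} follow from detectability of $(A,Q_1^{1/2})$: take $A$ Schur and $Q_1=0$, which is a detectable pair, and the unique positive-semidefinite stabilizing solution is $P=0$. So ``positive operator'' in the statement must be read as positive \emph{semi}definite, or detectability must be strengthened to observability to obtain $P\succ 0$; your claim that kernel vectors of $P$ are excluded by detectability alone is an overreach. Relatedly, the final sandwich $\lambda_{\min}(P)\|x_k\|^2\le V(x_k)$ in your UASL step collapses when $P$ is singular. This is harmless for the conclusion: once $A_{cl}=A-BL$ is known to be Schur you have $\|x_k\|\le c\,\rho^k\|x_0\|$ for some $\rho<1$, which delivers both clauses of Definition \ref{def:UASL} directly without any Lyapunov sandwich.
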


\begin{theorem}\label{thm:stability}\normalfont
Consider an LTI system described by the discrete time dynamics (1) under the optimal state-feedback control $u^\ast_k$, and the optimal transmission control $\theta^\ast_k$ derived according to (5). Under the controllability and stabilizability assumptions, and also assuming that the constraint (2) holds at every time-step $k\in\{1,2,\ldots\}$, the closed-loop system $x_{k+1}=(A-BL_k)x_k+BL_ke_k+w_k$ is Lyapunov mean-square stable.
\end{theorem}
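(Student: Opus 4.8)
The plan is to exploit the decomposition already recorded in \eqref{eq:sys_state}: the closed loop obeys $x_{k+1}=(A-BL_k)x_k+(BL_ke_k+w_k)$, and, crucially, the error $e_k$ of \eqref{err-state2} evolves independently of $x_k$ and is a \emph{deterministic} linear functional of the disturbance sequence alone, while $L_k$ is deterministic. So it suffices to establish two things: (i) the nominal time-varying loop $x^c_{k+1}=(A-BL_k)x^c_k$ is exponentially stable; and (ii) the forcing term $BL_ke_k+w_k$ has a second moment bounded uniformly in $k$. A variation-of-constants estimate then yields $\sup_k\E\|x_k\|_2^2<\infty$, which is the claimed Lyapunov mean-square stability.

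For (i), I would first invoke Lemma \ref{lemm:convergenceRiccati}: under its stabilizability/detectability hypotheses, $P_k\to P$ in operator norm, hence $L_k=(R+B^\textsf{T}P_{k+1}B)^{-1}B^\textsf{T}P_{k+1}A$ converges to $L=(R+B^\textsf{T}PB)^{-1}B^\textsf{T}PA$, and the limiting matrix $A-BL$ is Schur (the LTI loop with gain $L$ is UASL about the origin $x^\ast=0$). Fixing $S\succ0$ with $(A-BL)^\textsf{T}S(A-BL)-S=-I$ and using the norm $\|x\|_S^2=x^\textsf{T}Sx$, one has $\|A-BL\|_S=\bar\rho<1$; by continuity of matrix inversion/multiplication and $L_k\to L$, there are $k_0$ and $\rho\in(\bar\rho,1)$ with $\|A-BL_k\|_S\le\rho$ for all $k\ge k_0$, the finitely many earlier factors being bounded. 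Hence the nominal transition matrix $\Phi(k,j)=\prod_{\ell=j}^{k-1}(A-BL_\ell)$ satisfies a uniform geometric bound $\|\Phi(k,j)\|\le c\,\rho^{\,k-j}$ for some $c\ge1$ and all $k\ge j\ge0$.

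For (ii), the standing constraint \eqref{const1} is exactly what makes the argument go through: since at least one link is used at every step, the freshest measurement available to the controller is never older than $D$ steps, so by Proposition \ref{Prop_1} we get $\sum_{i=1}^{\tau_k}b_{i,k}=1$ with $\tau_k=\min\{D,k+1\}\le D$. Then \eqref{err-state2} writes $e_k$ as a sum of at most $D$ mutually independent recent noise terms $A^{j-1}w_{k-j}$, $1\le j\le D$; and because the optimal schedule $\theta^\ast$ (hence every $b_{i,k},c_{i,k}$) is deterministic, $M_k=\E[e_ke_k^\textsf{T}]=\sum_{i=1}^{\tau_k}c_{i,k}(A^{i-1})^\textsf{T}W_{k-i}A^{i-1}$ is a deterministic sequence with $\sup_k\Tr M_k\le\Tr\Sigma_0+\sum_{i=1}^{D}\Tr\big((A^{i-1})^\textsf{T}WA^{i-1}\big)=:\bar m<\infty$. (Drop \eqref{const1} and a link could be skipped indefinitely, making $\E\|e_k\|_2^2$ diverge for unstable $A$.) Also $\|L_k\|\le\bar L<\infty$ since $L_k$ converges, and $\E\|w_k\|_2^2=\Tr W$.

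Finally, from $x_k=\Phi(k,0)x_0+\sum_{j=0}^{k-1}\Phi(k,j+1)(BL_je_j+w_j)$ and Minkowski's inequality in $L^2$ (legitimate because $\Phi$ and $L_j$ are deterministic), I would obtain
\begin{align*}
\big(\E\|x_k\|_2^2\big)^{1/2}&\le c\rho^{k}\sqrt{\Tr\Sigma_0}+\sum_{j=0}^{k-1}c\rho^{\,k-1-j}\big(\|B\|\bar L\sqrt{\bar m}+\sqrt{\Tr W}\big)\\
&\le c\rho^{k}\sqrt{\Tr\Sigma_0}+\frac{c\big(\|B\|\bar L\sqrt{\bar m}+\sqrt{\Tr W}\big)}{1-\rho},
\end{align*}
so $\sup_{k\ge0}\E\|x_k\|_2^2<\infty$ --- a bound decaying geometrically in the initial second moment $\Tr\Sigma_0$ (equal to $\|x_0\|_2^2$ for a deterministic initial condition) above a fixed disturbance-induced floor --- which is LMSS in the sense of Definition \ref{def:LMSS}. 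I expect the main obstacle to be step (i): upgrading asymptotic stability of the \emph{limiting} pair $A-BL$ to a uniform exponential bound for the genuinely time-varying loop $A-BL_k$; the contraction-in-fixed-$S$-norm-plus-bounded-transient argument sketched above (or a citation to stability of asymptotically time-invariant linear systems) should close it. A secondary, more pedantic point is that, because of the persistent disturbance, the ``$\le\varepsilon$'' in \eqref{LMSS} must be read modulo the intrinsic floor $\tfrac{c(\|B\|\bar L\sqrt{\bar m}+\sqrt{\Tr W})}{1-\rho}$ isolated above.
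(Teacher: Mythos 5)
Your proposal is correct and follows essentially the same route as the paper: decompose the closed loop into the nominal time-varying system $x^c_{k+1}=(A-BL_k)x^c_k$, whose stability is obtained from Lemma \ref{lemm:convergenceRiccati}, plus the forcing term $BL_ke_k+w_k$, whose second moment is uniformly bounded because constraint (\ref{const1}) caps the age of the controller's freshest measurement at $D$, so that $e_k$ is a sum of at most $D$ recent independent noise terms. The only difference is that you make rigorous two steps the paper leaves informal --- the uniform geometric bound on the time-varying transition matrix $\Phi(k,j)$ (via a fixed Lyapunov norm and $L_k\to L$), and the final variation-of-constants/Minkowski estimate that actually combines the two bounds into $\sup_k \E\|x_k\|_2^2<\infty$ --- and you correctly flag that, with persistent noise, the bound in (\ref{LMSS}) can only be read modulo the disturbance-induced floor.
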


\begin{proof}\normalfont
Since the pairs $(A,B)$and $(A,Q_1^{\frac{1}{2}})$ are controllable and detectable, respectively, the closed-loop system $x^c_{k+1}=(A-BL_k)x_k^c$ is ensured to be UASL, assuming that no transmission delay exists, i.e. $e_k=0$. This conclusion follows from Lemma \ref{lemm:convergenceRiccati}. Therefore, the closed-loop matrix $(A-BL_k)$ is stable as $k\rightarrow \infty$ and $\lim_{k\rightarrow \infty}|x_k^c|\rightarrow 0$. Moreover, due to linearity of the system, it is ensured that there exists $\varepsilon_c>0$ such that $\textsf{E}\left[\|x^{c^{\textsf{T}}}_{k}x^c_k\|_2^2\right]\leq \varepsilon_c$ at any time $k$. Considering the transmission delays, the term $e_k$ appears in the dynamics. However, as $e_k$ is independent of $x_k$, it is sufficient to show the quadratic norm of error state $e_k$ is bounded in expectation for any $k\leq 0$. Hence, according to the Definition \ref{def:LMSS}, it must be ensured that, given $0<\bar{\varepsilon} <\varepsilon$, there exists $\bar{\rho}(\bar{\varepsilon})$ such that $\|e_0\|_2\!<\!\bar{\rho}$ implies
 \begin{equation*}
\sup_{k\geq 0}\textsf{E}\left[\|e_k\|_2^2\right]\leq \bar{\varepsilon}.
\end{equation*}
We consider the worst case evolution of error state over time, which happens when the transmission is always performed with the maximum delay, i.e. $D$, and consequently, the system is not updated for maximum of $D-1$ time-steps. We then evaluate the dynamics of error over any interval of length $D-1$ over which no state information has been received. Assume that at a time-step $k$ the controller has received one state information, and then the next update happens at time $k+D$. 
Therefore, over the interval $\left(k,k+D-1\right]$, the controller receives no state information. From (9), we know
\begin{equation*}
e_{k+D-1}=\sum_{j=1}^D A^{j-1}w_{k+D-1-j}.
\end{equation*}
It then follows that
\begin{align*}
\textsf{E}\left[\|e_{k+D-1}\|_2^2\right]&=\textsf{E}\left[\|\sum_{j=1}^D A^{j-1}w_{k+D-1-j}\|_2^2\right]\\
&=\sum_{j=1}^D \E\left[\|A^{j-1}w_{k+D-1-j}\|_2^2\right]\\&\leq  \sum_{j=1}^D \|A^{j-1}\|_2^2 \Tr(W),
\end{align*}
where, the second equality follows due to statistical independence of the noise realizations, and the inequality follows from the multiplicativity property of matrix norms. Note that, as the time interval $\left(k,k+D-1\right]$ is generic, this is the possible maximum error norm the system is expected to have at any time-step, due to not having an update for the last $D-1$ time-steps. This ensures that the inequality is valid for any other transmission sequence $\theta_k$ for $k\in\{1,2,\ldots\}$, optimal or non-optimal. Finally, boundedness of the error state $e_k$ and system state $x_k^c$ in expectation ensures that $x_k$ is also bounded in expectation at any time $k\in\{1,2,\ldots\}$, which satisfies LMSS condition in \textit{Definition} 1, and the proof is then complete. 
\end{proof}
\section{Appendix}\label{Appendix}
\subsection{Proof of Theorem \ref{thm:separation}} \label{app}
The LQG optimal value function at time $k+1$ is 
\begin{align}\nonumber
&V^\ast_{k+1}\!=\min_{u_{[k+1,T-1]}}\!\!\E\!\left[\sum_{t=k+1}^{T-1}\!\!x_t^{\textsf{T}}Q_1x_t\!+\!u_t^{\textsf{T}}Ru_t\!+\!x_T^{\textsf{T}}Q_2x_T|\mathcal{\bar{I}}_{k+1}\!\right]\!.
\end{align} 
Knowing that $\mathcal{\bar{I}}_k\subset \mathcal{\bar{I}}_{k+1}$, the law of total expectation yields
\begin{align}\nonumber
&\!\E\!\left[\!V^\ast_{k+1}|\mathcal{\bar{I}}_k\right]\!=\!\!\min_{u_{[k+1,T\!-\!1]}}\!\!\E\!\!\left[\!\sum_{\;t=k+1}^{T-1}\!\!\!\!x_t^{\textsf{T}}Q_1x_t\!+\!u_t^{\textsf{T}}Ru_t\!+\!x_T^{\textsf{T}}Q_2x_T|\mathcal{\bar{I}}_{k}\!\right]
\end{align} 
Therefore, it is straightforward to re-write $V^\ast_{k}$ as follows:
\begin{align}\label{value_function1}
V^\ast_k&=
\min_{u_{[k,T-1]}}\E[x_k^{\textsf{T}}Q_1x_k+u_k^{\textsf{T}}Ru_k+V^\ast_{k+1}|\mathcal{\bar{I}}_k].
\end{align}

Assume that $V^\ast_k$ can be expressed as follows:
\begin{equation}\label{value_function_re}
V^\ast_k=\E[x_k|\mathcal{\bar{I}}_k]^{\textsf{T}}P_k \E[x_k|\mathcal{\bar{I}}_k]+\pi_k \triangleq \hat{x}_k^{\textsf{T}}P_k \hat{x}_k+\pi_k,
\end{equation}
where, $\pi_k$ will be derived later as a term independent of the control $u_k$. Having (\ref{value_function_re}) assumed, (\ref{value_function1}) can be re-written~as
\begin{align}\label{value_function1_re}
&V^\ast_k=\\\nonumber
&\min_{u_{k}}\E\!\left[x_k^{\textsf{T}}Q_1x_k+u_k^{\textsf{T}}Ru_k+\hat{x}_{k+1}^{\textsf{T}}P_{k+1} \hat{x}_{k+1}+\pi_{k+1}|\mathcal{\bar{I}}_k\right]\!.
\end{align}
We define the apriori state estimate $\hat{x}_k^-\triangleq\E\left[x_k|\mathcal{\bar{I}}_{k-1}\right]=A\hat{x}_k+Bu_k$. Due to the fact that 
\begin{align*}
\!\E\!\!\left[\!\hat{x}_{k+1}^{\textsf{T}}\!P_{k+1}\hat{x}_{k+1}^-|\mathcal{\bar{I}}_k\!\right]\!\!=\!\E\!\!\left[\!\hat{x}_{k+1}^{-^{\textsf{T}}}\!P_{k+1}\hat{x}_{k+1}|\mathcal{\bar{I}}_k\!\right]\!\!=\!\hat{x}_{k+1}^{-^{\textsf{T}}}\!P_{k+1}\hat{x}_{k+1}^-
\end{align*}
then, (\ref{value_function1_re}) can be written as in the following: 
\begin{align}\nonumber
V^\ast_k&=\min_{u_{k}}\E[x_k^{\textsf{T}}Q_1x_k+u_k^{\textsf{T}}Ru_k|\mathcal{\bar{I}}_k]\\\nonumber
&+\min_{u_{k}}\E[(A\hat{x}_k+Bu_k)^{\textsf{T}}P_{k+1}\left(A\hat{x}_k+Bu_k\right)|\mathcal{\bar{I}}_k]\\\label{value_function1_final}
&+\xi_{k+1}^{\textsf{T}}P_{k+1}\xi_{k+1}+\pi_{k+1},
\end{align}
where, $\xi_{k+1}\!\triangleq  \!\hat{x}_{k+1}-\hat{x}_{k+1}^-$.
It is then simple to derive the optimal control $u^\ast_k$, minimizing (\ref{value_function1_final}), which is of the form
 \begin{equation*}
u^\ast_k=-(R+B^\textsf{T}P_{k+1}B)^{-1}B^\textsf{T}P_{k+1}A \hat{x}_k.
\end{equation*}
Plugging the optimal control $u^\ast_k$ in (\ref{value_function1_final}), together with replacing $x_k$ with its equivalent expression $e_k+\hat{x}_k$, result in\vspace{-2mm}
\begin{align}\label{value_function1_optimal}
V^\ast_k&=
\E[\hat{x}_k^{\textsf{T}}(\tilde{B}^{\textsf{T}}R\tilde{B}+\tilde{A}^{\textsf{T}}P_{k+1}\tilde{A}+Q_1)\hat{x}_k|\mathcal{\bar{I}}_k]\\\nonumber
&+\E[e_k^{\textsf{T}}Q_1e_k+\xi_{k+1}^{\textsf{T}}P_{k+1}\xi_{k+1}+\pi_{k+1}|\mathcal{\bar{I}}_k],
\end{align}
where, $\tilde{B}_k\!=\!\left(R\!+\!B^\textsf{T}\!P_{k+1}B\right)^{-1}\!B^\textsf{T}\!P_{k+1}A$, and $\tilde{A}_k\!=\!A\!-\!B\tilde{B}_k$.
The equality (\ref{value_function1_optimal}) is ensured since
\begin{align*}
&\E[e_k^{\textsf{T}}Q_1\hat{x}_k|\mathcal{\bar{I}}_k]=\E[\hat{x}_k^{\textsf{T}}Q_1e_k|\mathcal{\bar{I}}_k]=\\
&\E[x_k^{\textsf{T}}Q_1\E[x_k|\mathcal{\bar{I}}_k]\big|\mathcal{\bar{I}}_k]\!-\!\E[(\E[x_k|\mathcal{\bar{I}}_k])^{\textsf{T}}Q_1(\E[x_k|\mathcal{\bar{I}}_k])\big|\mathcal{\bar{I}}_k]\!=\!0.
\end{align*}
Comparing (\ref{value_function1_optimal}) with (\ref{value_function_re}), the followings are concluded
\begin{align}\nonumber
&\pi_k=\E\!\left[e_k^{\textsf{T}}Q_1e_k+\xi_{k+1}^{\textsf{T}}P_{k+1}\xi_{k+1}+\pi_{k+1}|\mathcal{\bar{I}}_k\right]\\\label{pi_k}
&=\!\E\!\left[\!\sum\nolimits_{t=k}^{T-1}\!\!e_t^{\textsf{T}}Q_1e_t\!+\!e_T^{\textsf{T}}Q_2e_T\!+\!\sum\nolimits_{t=k+1}^T\!\xi_{t}^{\textsf{T}}P_{t}\xi_{t}|\mathcal{\bar{I}}_k\!\right]\!.
\end{align}
From definitions of $\xi_k$ and $e_k$, it concludes for all $k\!\ge \!1$, that \vspace{-3mm} 
\begin{align*}
\xi_{k}+e_k&=x_k-\hat{x}_k^-=
Ae_{k-1}+w_{k-1}.
\end{align*}
Knowing that $\E[\xi_k^{\textsf{T}}P_ke_k|\mathcal{\bar{I}}_k]\!=\!\xi_k^{\textsf{T}}P_k\E[e_k|\mathcal{\bar{I}}_k]\!=\!0$, we obtain
\begin{align*}
\E&[\xi_k^{\textsf{T}}P_k\xi_k|\mathcal{\bar{I}}_k]\!+\!\E[e_k^{\textsf{T}}P_ke_k|\mathcal{\bar{I}}_k]\!=\!\E[(\xi_{k}\!+\!e_k)^{\textsf{T}}P_k(\xi_{k}\!+\!e_k)|\mathcal{\bar{I}}_k] \\
&=\E[(Ae_{k-1}+w_{k-1})^{\textsf{T}}P_k(Ae_{k-1}+w_{k-1})|\mathcal{\bar{I}}_k] \\
&=\E[e_{k-1}^{\textsf{T}}A^{\textsf{T}}P_kAe_{k-1}|\mathcal{\bar{I}}_k]+tr(P_kW).
\end{align*}
Then it follows that
\begin{align*}
&\E\left[\sum\nolimits_{t=k+1}^T\!\xi_{t}^{\textsf{T}}P_{t}\xi_{t}|\mathcal{\bar{I}}_k\right]\!=\!\sum\nolimits_{t=k+1}^T\!tr(P_tW)\\
&- \!\E\!\left[\sum\nolimits_{t=k+1}^T\!\!e_{t}^{\textsf{T}}P_{t}e_{t}|\mathcal{\bar{I}}_k\!\right]
\!+\!\E\left[\sum\nolimits_{t=k+1}^T \!e_{t-1}^{\textsf{T}}A^{\textsf{T}}P_tAe_{t-1}|\mathcal{\bar{I}}_k\!\right]
\end{align*}
Finally, defining $\tilde{P}_t=Q_1 + A^{\textsf{T}}P_{t+1}A-P_t$, for all $T>t\ge k$, the expression (\ref{pi_k}) for $\pi_k$ can be re-written as
\begin{align}\nonumber
\pi_k&=\E\left[\sum\nolimits_{t=k}^{T-1}e_t^\textsf{T}(Q_1+A^\textsf{T}P_{t+1}A)e_t+e_T^\textsf{T}Q_2e_T|\mathcal{\bar{I}}_k\right]\\\nonumber
&-\E\left[\sum\nolimits_{t=k+1}^Te_t^\textsf{T}{P}_te_t|\mathcal{\bar{I}}_k\right]+\sum\nolimits_{t=k+1}^Ttr(P_tW)\\\nonumber
&=\E\!\left[e_k^\textsf{T}{P}_ke_k\!+\!\sum\nolimits_{t=k}^{T-1}e_t^\textsf{T}\tilde{P}_te_t|\mathcal{\bar{I}}_k\right]\!+\!\!\sum\nolimits_{t=k+1}^T\!tr(P_tW).
\end{align}

\subsection{Proof of Proposition \ref{Prop_1}} \label{app:2}
Consider two cases; $k\!\ge \!D$, and $k\!< \!D$. 
At any time $k \!\ge \!D$, the latest information the controller can have is~$x_{k-1}$, only if 
$\theta_{k-1}^1\!=\!1$. If $\theta_{k-1}^1\!=\!0$, the latest information available is $x_{k-2}$, only if $\theta_{k-2}^1 \vee \theta_{k-2}^2\!=\!1$ (`$\vee$' is the logical {\tt OR} operator). The algebraic representation of the logical constraint $\theta_{k-2}^1 \vee \theta_{k-2}^2\!=\!1$ is $\theta_{k-2}^1 + \theta_{k-2}^2\!-\!\theta_{k-2}^1 \cdot\theta_{k-2}^2\!=\!1$. Similarly, we reach
\begin{align}\small\label{estimation1}
&\E[x_k|\mathcal{\bar{I}}_k]=\underbrace{\theta_{k-1}^1}_{b_{1,k}}\E[x_k|x_{k-1},U_{k-1}]+\\\nonumber
&\underbrace{\left(1-\theta_{k-1}^1\right)\!\left(\theta_{k-2}^1\!\vee\!\theta_{k-2}^2\right)}_{b_{2,k}}\!\E[x_k|x_{k-2},U_{k-1}]+\\\nonumber
&\!\underbrace{\left(1\!-\!\theta_{k-1}^1\right)\!\left(1\!-\!\theta_{k-2}^1\right)\!\left(1\!-\!\theta_{k-2}^2\right)\!\left(\!\vee_{i=1}^3\theta_{k-3}^i\!\right)}_{b_{3,k}}\!\E[x_k|x_{k-3},U_{k-1}]\\\nonumber
&+\quad\cdots\quad+\\\nonumber
&\!\underbrace{\prod\nolimits_{d=1}^{D-1}\prod\nolimits_{j=1}^{d}\left(1\!-\!\theta_{k-d}^j\right)\!\left(\vee_{i=1}^D\!\theta_{k-D}^i\right)}_{b_{D,k}}\!\!\E[x_k|x_{k-D},U_{k-1}]
\end{align}

For $k\!<\!D$, the oldest information the controller can have is $x_0$, only if $\vee_{i=1}^k\theta_0^i\!=\!1$. Otherwise, if at time $0$, the used link(s) had delay(s) greater than $k$, then $x_0$ is not available at time $k$, hence  statistics of $x_0$ are used. Thus for $k\!<\!D$,
\begin{align*}
&\E[x_k|\mathcal{\bar{I}}_k]=\theta_{k-1}^1\E[x_k|x_{k-1},U_{k-1}]+\\\nonumber
&\left(1-\theta_{k-1}^1\right)\!\left(\theta_{k-2}^1\!\vee\!\theta_{k-2}^2\right)\E[x_k|x_{k-2},U_{k-1}]+\\\nonumber
&+\quad\cdots\quad+\\\nonumber
&\prod\nolimits_{d=1}^{k-1}\prod\nolimits_{j=1}^{d}\left(1-\theta_{k-d}^j\right)\left(\vee_{i=1}^k\theta_{0}^i\right)\E[x_k|x_0,U_{k-1}]+\\\nonumber
&\prod\nolimits_{d=1}^{k}\prod\nolimits_{j=1}^{d}\left(1-\theta_{k-d}^j\right)\E[x_k|\bar{\mathcal{I}_0},U_{k-1}]
\end{align*}

For $k\!<\!D$, the same definition of $b_{1,k},b_{2,k},\cdots,b_{k,k}$ under-braced in (\ref{estimation1}) is used, while in addition, we define $b_{k+1,k}\!=\!\prod_{d=1}^{k}\prod_{j=1}^{d}(1-\theta_{k-d}^j)$ and $b_{k+2,k}\!=\!b_{k+3,k}\!=\!\cdots\!=\!b_{D,k}\!=\!0$. Finally, employing $\E[x_k|x_{-1},U_{k-1}]\triangleq\E[x_k|\mathcal{\bar{I}}_0,U_{k-1}]$, the proof then readily follows.



\bibliographystyle{ieeetr}
\bibliography{LCSS}

\begin{thebibliography}{10}

\bibitem{5400548}
W.~P. M.~H. Heemels, D.~Nešić, A.~R. Teel, and N.~van~de Wouw, ``Networked
  and quantized control systems with communication delays,'' in {\em
  Proceedings of the 48h IEEE Conf. on Decision and Control held jointly with
  28th Chinese Control Conf.}, pp.~7929--7935, 2009.

\bibitem{878601}
M.~S. Branicky, S.~M. Phillips, and W.~Zhang, ``Stability of networked control
  systems: explicit analysis of delay,'' in {\em Proceedings of the 2000
  American Control Conference.}, vol.~4, pp.~2352--2357, 2000.

\bibitem{bernhardsson1999comparison}
B.~Bernhardsson and K.~J. {\AA}str{\"o}m, ``Comparison of periodic and event
  based sampling for first-order stochastic systems,'' in {\em Preprints of the
  14th IFAC World Congress}, 1999.

\bibitem{MAMDUHI2013356}
M.~H. Mamduhi, A.~Molin, and S.~Hirche, ``On the stability of prioritized
  error-based scheduling for resource-constrained networked control systems,''
  {\em IFAC Proceedings Volumes, 4th Workshop on Distributed Estimation and
  Control in Networked Systems}, vol.~46, no.~27, pp.~356 -- 362, 2013.

\bibitem{5510124}
X.~Wang and M.~D. Lemmon, ``Event-triggering in distributed networked control
  systems,'' {\em IEEE Transactions on Automatic Control}, vol.~56,
  pp.~586--601, March 2011.

\bibitem{6068223}
D.~V. Dimarogonas, E.~Frazzoli, and K.~H. Johansson, ``Distributed
  event-triggered control for multi-agent systems,'' {\em IEEE Transactions on
  Automatic Control}, vol.~57, pp.~1291--1297, May 2012.

\bibitem{7300689}
D.~Maity and J.~S. Baras, ``Event based control of stochastic linear systems,''
  in {\em International Conference on Event-based Control, Communication, and
  Signal Processing (EBCCSP)}, pp.~1--8, 2015.

\bibitem{MAMDUHI2017209}
M.~H. Mamduhi, A.~Molin, D.~Tolic, and S.~Hirche, ``Error-dependent data
  scheduling in resource-aware multi-loop networked control systems,'' {\em
  Automatica}, vol.~81, pp.~209 -- 216, 2017.

\bibitem{RabiBaras2006}
M.~Rabi, G.~V. Moustakides, and J.~S. Baras, ``Multiple sampling for estimation
  on a finite horizon,'' in {\em Proceedings of the 45th IEEE Conference on
  Decision and Control}, pp.~1351--1357, 2006.

\bibitem{CervinCamacho2011}
A.~Cervin, M.~Velasco, P.~Marti, and A.~Camacho, ``Optimal online sampling
  period assignment: Theory and experiments,'' {\em IEEE Trans. on Control
  Systems Tech.}, vol.~19, no.~4, pp.~902--910, 2011.

\bibitem{MolinHirche2013}
A.~Molin and S.~Hirche, ``On the optimality of certainty equivalence for
  event-triggered control systems,'' {\em IEEE Transactions on Automatic
  Control}, vol.~58, no.~2, pp.~470--474, 2013.

\bibitem{Bao:2001:NAC:381677.381698}
L.~Bao and J.~J. Garcia-Luna-Aceves, ``A new approach to channel access
  scheduling for ad hoc networks,'' in {\em Proceedings of the 7th Intl. Conf.
  on Mobile Computing and Networking}, pp.~210--221, 2001.

\bibitem{LiYaxin2001}
Y.~Cao and V.~O.~K. Li, ``Scheduling algorithms in broadband wireless
  networks,'' {\em Proceedings of the IEEE}, vol.~89, no.~1, pp.~76--87, 2001.

\bibitem{WalshYe2001}
G.~C. Walsh and H.~Ye, ``Scheduling of networked control systems,'' {\em IEEE
  Control Systems}, vol.~21, no.~1, pp.~57--65, 2001.

\bibitem{6286997}
J.~Wu, Q.~S. Jia, K.~H. Johansson, and L.~Shi, ``Event-based sensor data
  scheduling: Trade-off between communication rate and estimation quality,''
  {\em IEEE Trans. on Automatic Control}, vol.~58, no.~4, pp.~1041--1046, 2013.

\bibitem{BasarSrikant2002}
T.~Basar and R.~Srikant, ``Revenue-maximizing pricing and capacity expansion in
  a many-users regime,'' in {\em Proc. of 21st IEEE Joint Conf. of Computer and
  Communication Societies}, vol.~1, pp.~294--301, 2002.

\bibitem{1208958}
J.~Shu and P.~Varaiya, ``Pricing network services,'' in {\em 22nd Joint Conf.
  of the IEEE Computer and Communications Societies}, vol.~2, pp.~1221--1230,
  2003.

\bibitem{MolinHirche2014TAC}
A.~Molin and S.~Hirche, ``Price-based adaptive scheduling in multi-loop control
  systems with resource constraints,'' {\em IEEE Transactions on Automatic
  Control}, vol.~59, no.~12, pp.~3282--3295, 2014.

\bibitem{1664999}
D.~P. Palomar and M.~Chiang, ``A tutorial on decomposition methods for network
  utility maximization,'' {\em IEEE Journal on Selected Areas in
  Communications}, vol.~24, no.~8, pp.~1439--1451, 2006.

\bibitem{logothetis1999sensor}
A.~Logothetis and A.~Isaksson, ``On sensor scheduling via information theoretic
  criteria,'' in {\em Proceedings of the American Control Conference}, vol.~4,
  pp.~2402--2406, IEEE, 1999.

\bibitem{c1}
F.~Kozin, ``A survey of stability of stochastic systems,'' {\em Automatica},
  vol.~5, no.~1, pp.~95--112, 1969.

\bibitem{c2}
W.~W. Hager and L.~L. Horowitz, ``Convergence and stability properties of the
  discrete riccati operator equation and the associated optimal control and
  filtering problems,'' {\em SIAM Journal on Control and Optimization},
  vol.~14, no.~2, pp.~295--312, 1976.

\end{thebibliography}

%
%
%

\end{document}